\documentclass{amsart}

\usepackage{amsfonts, amsthm, amsmath, amscd, amssymb}
\usepackage[all]{xy}
\usepackage[mathscr]{eucal}
\usepackage{epic, eepic}
\usepackage{enumerate}

\newtheorem{theorem}{Theorem}[section]
\newtheorem{lemma}[theorem]{Lemma}
\newtheorem{proposition}[theorem]{Proposition}
\newtheorem{corollary}[theorem]{Corollary}

\theoremstyle{remark}
\newtheorem{remark}[theorem]{Remark}

\numberwithin{equation}{section}

\newcommand{\mbar}{\overline{\M}}
\newcommand{\h}{\mathcal{H}}
\newcommand{\M}{\mathcal{M}}
\newcommand{\I}{\mathcal{I}}
\newcommand{\so}{\mathcal{O}}
\newcommand{\X}{\mathcal{X}}
\newcommand{\Y}{\mathcal{Y}}

\newcommand{\cc}{\mathbb{C}}
\newcommand{\pp}{\mathbb{P}}

\newcommand{\tch}{\widetilde{Ch}}
\newcommand{\ttd}{\widetilde{Td}}

\newcommand{\ld}{\lambda}
\newcommand{\ep}{\epsilon}

\newcommand{\ol}{\overline}
\newcommand{\lp}{\left(}
\newcommand{\rp}{\right)}

\newcommand{\hot}{ \text{higher degree terms}}
\DeclareMathOperator{\spec}{Spec}

\DeclareMathOperator{\id}{Id}
\DeclareMathOperator{\supp}{Supp}


\begin{document}

\title[Euler characteristics of cotangent line bundles]
{Euler characteristics of \\
universal cotangent line bundles on $\mbar_{1,n}$}

\author{Y.P.~Lee}
\email{yplee@math.utah.edu}

\author{F.~Qu}
\email{qu@math.utah.edu}

\address{Department of Mathematics, University of Utah,
Salt Lake City, Utah 84112-0090, U.S.A.}
\thanks{This project was partially supported by the NSF}

\subjclass[2010]{Primary 14H10; Secondary 14J15, 14D23, 14D22, 14H15}

\commby{Lev Borisov}

\begin{abstract}
We give an effective algorithm to compute the Euler characteristics
$\chi(\mbar_{1,n}, \otimes_{i=1}^n  L_i^{ d_i})$.
This work is a sequel to \cite{L0}.

In addition, we give a simple proof of Pandharipande's vanishing theorem 
$H^j (\mbar_{0,n}, \otimes_{i=1}^n L_i^{ d_i})=0$ for $j \ge 1, d_i \ge0$.
\end{abstract}

\maketitle

\setcounter{section}{-1}

\section{Introduction}
Let $\mbar_{1,n}$ be the moduli stack of $n$-pointed genus $1$ stable curves,
$\so$ its  structure sheaf, 
$\h$ the Hodge bundle, and  $L_i$
the universal cotangent line bundles at the $i$-th marked point, $ 1 \le i \le n$. 
The main result of this paper is the following theorem.

\begin{theorem} \label{t:main}
There is an effective algorithm of computing the Euler characteristics
\[ 
 \chi_{d, d_1, \ldots, d_n} := 
  \chi(\mbar_{1,n}, \h^{ -d} \otimes \bigotimes_{i=1}^n L_i^{ d_i} ), \qquad 
 d,\ d_i \ge 0. 
\]
\end{theorem}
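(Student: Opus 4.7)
The plan is to compute the Euler characteristics inductively on $n$, using the forgetful morphism $\pi \colon \mbar_{1,n} \to \mbar_{1,n-1}$ that realises $\mbar_{1,n}$ as the universal curve over $\mbar_{1,n-1}$, together with the projection formula and the Leray spectral sequence, which give
\[
\chi(\mbar_{1,n}, \mathcal{F}) \;=\; \chi(\mbar_{1,n-1}, R^0\pi_* \mathcal{F}) - \chi(\mbar_{1,n-1}, R^1\pi_* \mathcal{F})
\]
for any coherent $\mathcal{F}$. For the base case $n = 1$, the stack $\mbar_{1,1}$ is the weighted projective line $\pp(4,6)$, on which $L_1 \cong \h$ and both are an explicit rational multiple of the generator of the Picard group, so $\chi(\mbar_{1,1}, \h^{-d} \otimes L_1^{d_1})$ reduces to an explicit dimension count on $\pp(4,6)$.

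For the inductive step I would use the following standard identities on the universal curve $\pi$:
\begin{enumerate}[(i)]
\item $\pi^* \h \cong \h$, so by the projection formula the Hodge factor passes freely through $R\pi_*$;
\item $\pi^* L_i \cong L_i \otimes \so(D_{i,n})$ for $1 \le i \le n-1$, where $D_{i,n}$ is the boundary divisor along which the $i$-th and $n$-th markings collide;
\item a standard expression for $L_n$ in terms of $\omega_\pi$ and the divisors $D_{i,n}$;
\item $R^0\pi_*\so = \so$, $R^1\pi_*\so = \h^\vee$, $R^0\pi_*\omega_\pi = \h$, $R^1\pi_*\omega_\pi = \so$.
\end{enumerate}
Identities (i)--(iii), combined with the projection formula, rewrite the integrand $\h^{-d}\otimes\bigotimes_i L_i^{d_i}$ as $\pi^*(\text{line bundle on }\mbar_{1,n-1})$ tensored with a monomial in $\omega_\pi$ and the classes $\so(D_{i,n})$. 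Filtering each $\so(-kD_{i,n})$ by the standard short exact sequences
\[
0 \to \so(-k D_{i,n}) \to \so(-(k-1) D_{i,n}) \to \so_{D_{i,n}}(-(k-1) D_{i,n}) \to 0,
\]
and identifying $D_{i,n}\cong \mbar_{1,n-1}$ via the $i$-th section, produces at each step either a $\chi$ on $\mbar_{1,n-1}$ of strictly smaller total degree (handled by induction) or a residual pullback of the form $\pi^*(\cdots)\otimes\omega_\pi^{d_n}$, whose Euler characteristic is computed by (iv).

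The main obstacle, as in the genus $0$ precedent \cite{L0}, is bookkeeping: one has to pick a well-founded termination invariant, for instance the lexicographic triple $(n, \sum d_i, d_n)$, and verify that each elementary reduction strictly decreases it, while tracking the boundary contributions, which on $D_{i,n}\cong\mbar_{1,n-1}$ reintroduce cotangent line classes on the smaller moduli through the adjunction formula for $\so(D_{i,n})|_{D_{i,n}}$. The Hodge factor $\h^{-d}$ is passive throughout by (i), so the essential algorithmic content of the proof lies entirely in the $L_i$-recursion; effectivity is guaranteed once one exhibits an explicit termination invariant and verifies that every branch of the recursion eventually lands in the base case on $\mbar_{1,1}$.
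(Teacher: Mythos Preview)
Your outline and the paper's proof share the same base case on $\mbar_{1,1}\simeq\pp(4,6)$ and the same use of the string equation to dispose of the case where some $d_i=0$; the divergence is in how the ``all $d_i\ge 1$'' case is handled. The paper does \emph{not} push forward along $\pi$ directly. Instead it packages those terms into the auxiliary generating function
\[
\Phi_n=\chi\Bigl(\mbar_{1,n},\ \tfrac{1}{1-q\h^{-1}}\prod_{i=1}^n\bigl(\tfrac{1}{1-q_iL_i}-\tfrac{1}{1-q_i}\bigr)\Bigr)
\]
and evaluates $\Phi_n$ by the orbifold (Kawasaki--Toen) Riemann--Roch formula, i.e.\ by integrating over the inertia stack $I\mbar_{1,n}$. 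The substance of the argument is the explicit bookkeeping of the twisted sectors of $\mbar_{1,n}$ (the one-dimensional sectors $\overline{A_k}$ for $2\le k\le 4$ and a finite list of zero-dimensional $B\mu_r$'s), together with the observation that the factor $\prod_i(L_i-\so)$ kills both the higher-degree terms on the untwisted sector and all contributions from boundary-type twisted sectors. None of this appears in your sketch.

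There is also a concrete gap in your reduction. After peeling off all the $D_{i,n}$'s you are left with $\pi^*(\cdots)\otimes\omega_\pi^{d_n}$ and invoke (iv); but (iv) only supplies $R\pi_*\omega_\pi^{\,0}$ and $R\pi_*\omega_\pi^{\,1}$. For $d_n\ge 2$ the class $R\pi_*\omega_\pi^{d_n}$ is not given by (iv), and in genus~$1$ it is genuinely delicate: on a smooth fibre $\omega_C\cong\so_C$, whereas on a reducible fibre $C=E\cup\pp^1$ one has $\omega_C|_E=\so_E(p)$ and $\omega_C|_{\pp^1}=\so(-1)$, so $h^0(C,\omega_C^{-m})$ jumps and the individual $R^i\pi_*\omega_\pi^{d_n}$ are not even locally free. (This is exactly the place where the genus-$0$ argument of \cite{L0} breaks: there $\deg\omega_\pi|_{\text{fibre}}=-2$ forces $R^0\pi_*\omega_\pi^{k}=0$ for $k\ge 1$, and duality finishes the job.) One can try to repair this by writing $\omega_\pi$ as $\pi^*\h$ corrected along the reducible locus, but carrying that correction through the recursion is precisely the work that the paper replaces by the twisted-sector analysis. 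A smaller point: your identity (ii) has the sign reversed --- the comparison is $L_i=\pi^*L_i\otimes\so(D_{i,n})$, equivalently $\pi^*L_i=L_i\otimes\so(-D_{i,n})$ --- and correspondingly the Hodge factor is not entirely passive, since $R^1\pi_*\so=\h^{\vee}$ increments the exponent of $\h^{-1}$ at each step (harmless for termination in $n$, but worth noting).
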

\noindent
The details of this algorithm are presented in Section~2.

\vspace{10pt}

This work is a sequel to \cite{L0}, where we calculated the 
Euler characteristics
\[
  \chi (\mbar_{0,n}, \otimes_i L_i^{ d_i})
\]
at genus zero.
These are our preliminary attempts in search of a $K$-theoretic version of
Witten--Kontsevich's theory of two-dimensional topological gravity.
In the Witten-Kontsevich theory, the correlators are the intersection numbers
of tautological classes on the moduli spaces of stable curves
\begin{equation} \label{e:wk}
 \int_{\mbar_{g,n}} \psi_1^{d_1} \ldots \psi_n^{d_n}.
\end{equation}
The natural $K$-theoretic version of intersection numbers 
(i.e.~pushforward to a point in cohomology theory)
are the Euler characteristics (i.e.~pushforward to a point in $K$-theory).
As Witten--Kontsevich theory states that a generating function
of \eqref{e:wk} is the $\tau$-function of the $KdV$ hierarchy,
it is reasonable to surmise that a similar generating function
in $K$-theory could be a $\tau$-function of a version of a discrete
$KdV$ hierarchy.
Note that the phenomenon of replacing differential equations in quantum
cohomology by finite difference equations in quantum $K$-theory were
observed in earlier examples \cite{GL} and only very recently 
demonstrated to hold in general \cite{GT}.

Furthermore, since Witten--Kontsevich theory is the Gromov--Witten theory
for the target space being a point, it is natural to consider these
calculations as basic ingredients for the \emph{quantum $K$-theory} 
developed in \cite{aG} and \cite{L}.
In the calculation of quantum $K$-invariants at genus one
via localization, the Hodge bundle will appear naturally.
It is therefore reasonable to consider slightly more general 
correlators, which have the additional benefits of
facilitating the induction process of our algorithm.

\vspace{10pt}

Our strategy of proving Theorem~\ref{t:main} is to apply
the orbifold Riemann-Roch theorem to
\[  
  \chi':= \chi \lp \mbar_{1,n}, \h^{ -d} \bigotimes_{i=1}^n (L_i^{ d_i}-\so) 
  \rp   , \qquad d, d_i \ge 0 .\]

We were able to determine $\chi'$ by carefully examining and performing computations on the twisted sectors of
$\mbar_{1,n}$.
In doing so, we find the use of \emph{generating functions} essential.
These functions can be found in Section~2, starting with Equation~\eqref{e:2.1}.
It is then not difficult to see that
one can determine $\chi$ on $\mbar_{1,n}$ by
$\chi'$ and  $\chi$ on $\mbar_{1,n-1}$.
Hence, we can reduce all calculations to $\mbar_{1,1}$,
whose generating function is calculated explicitly 
in Lemma~\ref{l:mf} and Proposition~\ref{p:gmf}.
Note that when $n=1$ the generating function is a rational function,
as in the case of genus zero case.
We expect the generating function of $\chi_{d, d_1, \ldots, d_n}$ to be
rational as well, but are not able to find the correct closed form.
We did, however, perform a consistency check: Our algorithm produces 
$\chi_{d, d_1, \ldots, d_n}$ as \emph{integers}, even though the 
intermediate steps require rational numbers, which arise as a consequence
of the orbifold Riemann--Roch formula and the stack structure of $\mbar_{1,n}$.

\vspace{10pt}

Indeed, the $n=1$ case is closely related to the theory of \emph{modular forms}.
Theorem~\ref{t:main} can be considered as a generalization of the 
following well-known fact.

\begin{proposition}[See Lemma~\ref{l:mf} and Proposition~\ref{p:gmf}] 
\label{p:mf}
\[
 \chi \left( \mbar_{1,1}, \frac{1}{1-q L_1} \right)
  =\frac{1}{(1-q^4)(1-q^6)}.
\]
\end{proposition}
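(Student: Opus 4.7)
The plan is to reduce the statement to a Hilbert-series computation on the weighted projective stack $\mathcal{P}(4,6)$, via two steps: identify $L_1$ with the Hodge line $\h$ on $\mbar_{1,1}$, and then use the classical isomorphism $\mbar_{1,1} \cong \mathcal{P}(4,6)$.

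First I would show $L_1 \cong \h$ on $\mbar_{1,1}$. For any stable $1$-pointed genus $1$ curve $(C,p)$, the dualizing sheaf $\omega_C$ satisfies $h^0(\omega_C)=1$ and its generating section is translation invariant and nowhere vanishing on the smooth locus, so restriction to the marked point gives an isomorphism $H^0(C,\omega_C) \xrightarrow{\sim} \omega_C|_p = T_p^*C$. Globalizing in families yields $\h \cong L_1$ on $\mbar_{1,1}$, so the claim becomes
\[
\sum_{d \ge 0} q^d \, \chi(\mbar_{1,1}, \h^d) = \frac{1}{(1-q^4)(1-q^6)}.
\]

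Second I would invoke $\mbar_{1,1} \cong \mathcal{P}(4,6)$, coming from the Eisenstein series, which identifies $\bigoplus_{k\ge 0} H^0(\mbar_{1,1}, \h^k)$ with the polynomial ring $\cc[E_4, E_6]$ of weights $(4,6)$ and sends $\h$ to the tautological line bundle $\so(1)$. On $\mathcal{P}(4,6) = [(\cc^2 \setminus 0)/\cc^\times]$, $H^0(\so(k))$ is the degree-$k$ piece of $\cc[x,y]$ with weights $(4,6)$, whose Hilbert series is precisely $\tfrac{1}{(1-q^4)(1-q^6)}$. Since the canonical sheaf is $\so(-10)$, Serre duality gives $H^1(\so(k)) \cong H^0(\so(-k-10))^\vee$, which vanishes for every $k \ge 0$ because the weighted polynomial ring has no elements in negative degree. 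Hence $\chi(\mbar_{1,1}, L_1^d) = h^0(\mathcal{P}(4,6), \so(d))$, and summing over $d$ yields the claimed closed form.

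The main obstacle is really the stack isomorphism $\mbar_{1,1} \cong \mathcal{P}(4,6)$; it is classical but deserves care because of the generic $\mathbb{Z}/2$ stabilizer of $\mbar_{1,1}$ and the extra automorphisms at $j=0,1728$. One can sidestep this black box: compute $H^0(\mbar_{1,1}, \h^k)=M_k(\mathrm{SL}_2(\mathbb{Z}))$ directly from the structure of $\cc[E_4,E_6]$, and verify $H^1$-vanishing by orbifold Serre duality on $\mbar_{1,1}$ using $\omega_{\mbar_{1,1}} \cong \h^{-10}$ (the contribution $\h^{-12}$ beyond the open-part canonical $\h^{2}$ coming from the boundary divisor, via the weight-$12$ discriminant cusp form $\Delta$).
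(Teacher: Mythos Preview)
Your proposal is correct and follows essentially the same route as the paper: identify $L_1$ (and $\h$) with $\so(1)$ under $\mbar_{1,1}\cong\pp(4,6)$, read off $h^0(\so(k))$ as the Hilbert series $1/((1-q^4)(1-q^6))$, and kill $H^1$ for $k\ge 0$ via Serre duality with $\omega\cong\so(-10)$. The only cosmetic difference is that you first argue $L_1\cong\h$ and then pass to $\so(1)$, whereas the paper simply asserts that both correspond to $\so(1)$; your modular-forms aside is likewise just a restatement of the same Hilbert-series computation.
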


Since $\mbar_{1,1}$ is the moduli stack of elliptic curves, and 
sections of $L_1^{ k}$ are the modular forms of weight $2k$,
Proposition~\ref{p:mf} can be considered as a rephrase of the classical result 
that the space of the modular forms is generated by a weight four and a 
weight six modular forms.

\vspace{10pt}

Another result included in this paper, in Appendix~A, is a new proof
of Pandharipande's vanishing theorem \cite{rP} at genus zero.

\begin{theorem}[\cite{rP}] \label{t:rP}
\[
 H^j (\mbar_{0,n}, \otimes_{i=1}^n L_i^{d_i}) =0
\]
for $j \ge 1$ and $d_i \ge 0$. 
\end{theorem}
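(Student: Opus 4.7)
The plan is to proceed by induction on $n$. The base case $n=3$ is immediate: $\mbar_{0,3}$ is a point, so all higher cohomology vanishes trivially.

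For the inductive step I would use the forgetful morphism $\pi\colon\mbar_{0,n}\to\mbar_{0,n-1}$ obtained by dropping the last marked point, which presents $\mbar_{0,n}$ as the universal curve over $\mbar_{0,n-1}$ with disjoint sections $\sigma_1,\dots,\sigma_{n-1}$. The standard comparison formulas
\[
L_i\cong \pi^{\ast}L_i\otimes \so(\sigma_i)\quad(i<n),\qquad L_n\cong \omega_{\pi}\otimes \so\!\Bigl(\sum_{i<n}\sigma_i\Bigr)
\]
rewrite the bundle of interest as
\[
\bigotimes_{i=1}^n L_i^{d_i}\;\cong\;\pi^{\ast}\!\Bigl(\bigotimes_{i<n}L_i^{d_i}\Bigr)\otimes \omega_\pi^{d_n}\otimes \so\!\Bigl(\sum_{i<n}(d_i+d_n)\sigma_i\Bigr).
\]
A fiberwise degree count shows that the restriction of this line bundle has nonnegative degree on every component of every (at worst nodal) fiber; a smooth fiber contributes degree $(n-3)d_n+\sum_{i<n}d_i\ge 0$. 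Hence $R^j\pi_\ast$ vanishes for $j\ge 1$, and Leray together with the projection formula gives
\[
H^j\Bigl(\mbar_{0,n},\bigotimes_{i=1}^n L_i^{d_i}\Bigr)\;\cong\;H^j\Bigl(\mbar_{0,n-1},\,\bigotimes_{i<n}L_i^{d_i}\otimes V\Bigr),
\]
where $V:=\pi_\ast\bigl(\omega_\pi^{d_n}\otimes\so(\sum_{i<n}(d_i+d_n)\sigma_i)\bigr)$ is locally free on $\mbar_{0,n-1}$ of rank $(n-3)d_n+\sum_{i<n}d_i+1$.

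To conclude I need vanishing of the right-hand side for $j\ge 1$. I would build a filtration of $V$ (or directly of $\bigotimes_{i<n}L_i^{d_i}\otimes V$) by peeling off the sections $\sigma_j$ one at a time via the short exact sequences $0\to\mathcal{G}(-\sigma_j)\to\mathcal{G}\to\mathcal{G}|_{\sigma_j}\to 0$ on $\mbar_{0,n}$ and pushing forward. The disjointness of the sections together with the adjunction identities $\so(\sigma_j)|_{\sigma_j}\cong L_j^{-1}$ and $\omega_\pi|_{\sigma_j}\cong L_j$ make the resulting subquotients explicit, and, combined with the inductive hypothesis on $\mbar_{0,n-1}$ and an auxiliary induction on $\sum_i d_i$, should reduce the desired vanishing to a finite list of known cases.

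The main obstacle is that the naive peeling of $\sigma_j$ produces restrictions of the shape $L_j^{-d_j}$ on $\mbar_{0,n-1}$, i.e.\ \emph{negative} powers of the cotangent line bundles, to which the induction hypothesis does not directly apply. Overcoming this requires either a more careful ordering of the filtration so that the successive quotients are genuine tensor products of nonnegative powers of the $L_k$, or a dualization via Serre duality on $\mbar_{0,n-1}$ using the explicit expression for its canonical bundle in terms of $\psi$-classes and boundary divisors. This is the heart of the argument and the place where the hypothesis $d_i\ge 0$ is essential.
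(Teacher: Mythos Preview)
Your setup through the vanishing of $R^1\pi_*$ and the Leray reduction is correct, but the proof is incomplete exactly where you say it is: you have not produced the filtration, and your two proposed escape routes (reordering, Serre duality on $\mbar_{0,n-1}$) are not carried out. In fact the peeled quotients $\mathcal{G}|_{\sigma_j}$ already come out as nonnegative tensor products of the $L_k$; the real problem is the residual term $\pi^*(\otimes_{i<n}L_i^{d_i})\otimes\omega_\pi^{\,d_n}$ left after all the peeling, which for $d_n\ge 1$ has nonvanishing $R^1\pi_*$ supported in a way you would then have to analyze. So the gap is not where you think it is, but it is a gap.

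The paper sidesteps this entirely by splitting into two cases. If some $d_i=0$ (say $d_n=0$), then there is no $\omega_\pi$ factor and the string equation identifies $\pi_*\bigl(\otimes_{i<n}L_i^{d_i}\bigr)$ on $\mbar_{0,n-1}$ as a direct sum of tensor products $\otimes_{i<n}L_i^{e_i}$ with $0\le e_i\le d_i$; induction on $n$ finishes this case. If instead all $d_i>0$, the paper does \emph{not} push forward. It chooses sections $s_i\in H^0(L_i)$ whose common zero locus on $\mbar_{0,n}$ is empty (this is checked inductively via the forgetful map), sets $V=\bigoplus_i L_i^{d_i}$, and uses the exact Koszul complex
\[
0\to\mathcal{O}\to V\to\textstyle\bigwedge^2 V\to\cdots\to\bigwedge^n V=\otimes_i L_i^{d_i}\to 0
\]
attached to $\oplus_i s_i^{\otimes d_i}$. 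For $q<n$ every summand of $\bigwedge^q V$ is a tensor product with at least one exponent equal to $0$, so its higher cohomology vanishes by the first case. A hypercohomology/spectral-sequence comparison then forces $H^j(\mbar_{0,n},\otimes_i L_i^{d_i})=0$ for $j\ge 1$. This Koszul trick is the missing idea in your proposal; it replaces the delicate analysis of $\pi_*$ and $\omega_\pi^{d_n}$ by a purely formal reduction to the already-handled case.
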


Our proof is comparably simpler and shorter, and does not use
M.~Kapranov's results on $\mbar_{0,n}$ \cite{mK}.
Only basic definitions and elementary manipulation of spectral sequences
are used.

\vspace{10pt}

This paper is organized as follows. 
In Section~1 we recall the necessary background.
We then formulate a more precise version of the reduction algorithm
in Section~2, and prove Theorem~\ref{t:main} there.
In Appendix~A the (new) proof of Theorem~\ref{t:rP} is given.


\section{Preliminaries}
We work over the ground field $\cc$.

\subsection{Twisted sectors of  $\mbar_{1,n}$}
 We summarize the results we need concerning the inertia stack of $\mbar_{1,n}$
in \cite[section 3.b]{P1}.

For a DM stack $\X$, recall a $\spec\cc$ point of its inertia stack $I\X$
is given by a pair $(x,g)$ with $x \in \X(\spec\cc)$ and $g \in Aut_{\X}(x)$. 
$\X$ is naturally viewed as a component of $I\X$ consists of point $(x,g)$ with $g$ trivial, and we denote this component by $(\X, \id)$. A twisted sector is a connected component of $I\X$ disjoint from $(\X, \id)$. 

\begin{theorem}[\cite{P1} Theorem 3.22, 3.24]\label{inertia}
The twisted sectors of $I\mbar_{1,n}$ come from either of the following
two sources:
\begin{enumerate}
   \item 
      the closure of a twisted sector of $I\M_{1,n}$ in $I\mbar_{1,n}$, 
   \item 
      a twisted sector of 
     $I(\mbar_{0, K\cup \bullet} \times \mbar_{1, K^c \cup \bullet})$
      via $I\Delta_K$. 
\end{enumerate}
Here 
$
 \Delta_{K}:\mbar_{0, K\cup \bullet} \times \mbar_{1, K^c \cup \bullet} 
  \to \mbar_ {1,n}
$
is the closed immersion gluing the marked points $\bullet$,
$K$ is a subset of $[n]$
with $|K| \ge 2$, and $K^c$ its complement.
$
   I\Delta_K:
   I(\mbar_{0, K\cup \bullet} \times \mbar_{1, K^c \cup \bullet}) 
   \to I\mbar_{1,n}
$
is the induced closed immersion between the corresponding inertia stacks.
\end{theorem}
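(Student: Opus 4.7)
The plan is to classify pairs $(C,g)$ with $C$ an $n$-pointed stable curve of genus $1$ and $g\in\mathrm{Aut}(C,p_1,\ldots,p_n)$ nontrivial, then group these pairs into deformation-closed families and match each family with either source~(1) or source~(2).

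For the smooth case, declaring $p_1$ the origin realises $E=C$ as an $n$-pointed elliptic curve and identifies $\mathrm{Aut}(E,p_1,\ldots,p_n)$ with the subgroup of $\mathrm{Aut}(E,0)$ that additionally fixes $p_2,\ldots,p_n$. Since $\mathrm{Aut}(E,0)$ is $\mathbb{Z}/2$ generically and jumps to $\mathbb{Z}/4$ or $\mathbb{Z}/6$ only at $j=1728$ and $j=0$, each nontrivial $g$ cuts out a locally closed substack of $\M_{1,n}$ by the conditions that $j$ is special (if $\mathrm{ord}(g)>2$) and that $p_2,\ldots,p_n$ lie in the fixed set of $g$. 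These substacks are the components of $I\M_{1,n}$, and their closures in $I\mbar_{1,n}$ are the twisted sectors of source~(1).

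For the nodal case I would use a smoothing argument. At a node $x$ of $C$, the universal deformation opening $x$ is $g$-equivariant---so $x$ is smoothable within the inertia stack---exactly when $g$ preserves the two branches and acts trivially on each branch's tangent line. The generic point of any boundary twisted sector has all such smoothable nodes already smoothed. I would then argue that the remaining $C$ retains exactly one node, separating $C$ into a genus~$0$ component $C_0$ with marked-point indices $K$ and a genus~$1$ component $C_1$ with indices $K^c$. Because stable genus~$0$ curves with at least three special points are rigid, $g|_{C_0}=\id$, so the automorphism lives entirely on the $\mbar_{1,K^c\cup\bullet}$ factor; this identifies the twisted sector with the $I\Delta_K$-image of a twisted sector of $I(\mbar_{0,K\cup\bullet}\times\mbar_{1,K^c\cup\bullet})$.

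The main obstacle is establishing the single-node conclusion: \emph{a priori}, $g$ could act on the dual graph in intricate ways---rotating a cycle of $\pp^1$'s forming the genus~$1$ core, swapping isomorphic rational tails, or producing several simultaneously unsmoothable nodes with nontrivial tangent-line characters. Ruling these possibilities out should combine two inputs: that stable rational subtrees admit no nontrivial automorphism fixing both their marked points and node attachments (so every nontrivial $g$-action is supported on the genus~$1$ core), and that the smoothability criterion allows one to contract or smooth away any rational structure not separating a single genus~$0$ bubble of the $\Delta_K$ shape. Putting these together forces the generic boundary configuration to be exactly the single-bubble picture claimed in~(2).
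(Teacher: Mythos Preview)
The paper does not give its own proof of this theorem: it is quoted verbatim as background from Pagani \cite{P1} (Theorems~3.22 and~3.24 there), so there is no in-paper argument to compare against. What follows is feedback on your sketch as a standalone attempt.

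Your overall plan---classify $(C,g)$, treat the smooth locus directly, and at a generic boundary point peel off a rational piece on which $g$ is trivial---is the right shape, and the observation that stable pointed rational trees have no nontrivial automorphisms is indeed the key input. Two points need repair. First, your smoothability criterion is misstated: the first-order smoothing parameter at a node lies in the tensor product of the two branch tangent lines, so the node is $g$-equivariantly smoothable precisely when $g$ acts trivially on that tensor product (equivalently, when the two tangent eigenvalues multiply to $1$), not when each eigenvalue is $1$ separately. Branch-swapping nodes can also be smoothable.

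Second, the assertion that the generic $(C,g)$ of a boundary sector has \emph{exactly one} node is false. For example, take a smooth elliptic curve $E$ with the $(-1)$-involution and attach two rational tails at distinct $2$-torsion points, each tail carrying at least two marked points; both attaching nodes have eigenvalue product $-1$ and are unsmoothable, so this two-noded curve is generic in its sector. What you actually need (and what Theorem~\ref{inertia} asserts) is that there exists \emph{at least one} separating node whose rational side $C_0$ is fixed pointwise by $g$; the other side $C_1$ lands in $\mbar_{1,K^c\cup\bullet}$ and may itself be nodal, with the description then applying recursively through the factor $I\mbar_{1,K^c\cup\bullet}$. With that correction your ``main obstacle'' becomes: show that a generic boundary $(C,g)$ always has such a separating node, i.e.\ that the non-separating boundary configurations (cycles of $\mathbb{P}^1$'s with an involution) already lie in the closure $\ol{A_k}$ of a smooth sector. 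This last check is genuine and is where the work in \cite{P1} goes.
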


As $ I(\mbar_{0, K\cup \bullet} \times \mbar_{1, K^c \cup \bullet}) \simeq 
   \mbar_{0, K\cup \bullet} \times I\mbar_{1, K^c \cup \bullet} $, type (2) twisted sectors are built up from
type (1) twisted sectors.

The analysis of type (1) twisted sectors in \cite{P1}
starts with the following description of $\mbar_{1,1}$.
\begin{proposition}
    $\mbar_{1,1}$ is equivalent to the weighted projective space $\pp(4,6)$.
 \end{proposition}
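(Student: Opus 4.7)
The plan is to exhibit the standard Weierstrass presentation of $\mbar_{1,1}$. Since we are over $\cc$, every $1$-pointed stable genus $1$ curve $(E,p)$ can, via the linear system $|3p|$ (together with the fact that $h^0(\so_E(p)) = h^0(\so_E(2p)) < h^0(\so_E(3p))$), be embedded as a Weierstrass cubic in $\pp^2$ with $p$ as the flex at infinity. After the usual changes of variable available in characteristic $\neq 2,3$, this cubic takes the short form
\[
  y^2 = x^3 + ax + b,
\]
and the curve is stable (smooth or an irreducible nodal cubic) precisely when $(a,b) \neq (0,0)$.

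Next I would compute the residual symmetry. The only automorphisms of $\pp^2$ preserving a Weierstrass equation of this form and fixing the flex at infinity are the rescalings $(x,y) \mapsto (\lambda^2 x, \lambda^3 y)$ for $\lambda \in \cc^\times$, under which $(a,b) \mapsto (\lambda^4 a, \lambda^6 b)$. Hence the assignment $(a,b) \mapsto [y^2=x^3+ax+b]$ determines a morphism
\[
  \bigl[(\mathbb{A}^2 \setminus \{0\})\bigm/\mathbb{G}_m\bigr] \longrightarrow \mbar_{1,1},
\]
where $\mathbb{G}_m$ acts with weights $(4,6)$, and the source is by definition $\pp(4,6)$. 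The inertia is consistent: the generic stabilizer is $\mu_2$ (hyperelliptic involution $\lambda=-1$), with enhanced stabilizers $\mu_4$ at $[1\!:\!0]$ (the $j=1728$ curve) and $\mu_6$ at $[0\!:\!1]$ (the $j=0$ curve), matching the automorphism groups of the corresponding elliptic curves, and $\mu_2$ at the nodal locus $4a^3+27b^2=0$, $(a,b)\neq 0$.

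To promote this pointwise picture to an equivalence of stacks, I would check the construction works in families: for a family $\pi\colon\mathcal{E}\to S$ of $1$-pointed stable genus $1$ curves with section $\sigma$, the sheaves $\pi_*\so_{\mathcal{E}}(2\sigma)$ and $\pi_*\so_{\mathcal{E}}(3\sigma)$ are locally free of ranks $2$ and $3$, and after choosing local trivializations and a section generating $\pi_*\so_{\mathcal{E}}(\sigma)$, the relative embedding by $|3\sigma|$ produces a Weierstrass equation over $S$ whose coefficients $(a,b)$ are determined up to the $\mathbb{G}_m$-action above. This gives a morphism $\mbar_{1,1} \to [(\mathbb{A}^2\setminus\{0\})/\mathbb{G}_m]$ inverse, up to the stated gauge, to the first one.

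The main obstacle is precisely this family-level verification: one must argue that the Weierstrass presentation is functorial in $S$ (i.e.\ that the local trivializations can be bundled into a $\mathbb{G}_m$-torsor and the resulting map descends correctly), and that on the boundary the irreducible nodal cubic is indeed the only stable degeneration parametrized by the discriminant locus $\{4a^3+27b^2=0,\ (a,b)\neq 0\}$. Both points are classical but are where the real content lies; the rest is simply invariant theory for the $\mathbb{G}_m$-action with weights $(4,6)$.
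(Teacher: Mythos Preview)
Your proposal is correct and follows the same route the paper sketches: the paper does not give an independent proof but cites \cite{P1} and simply records the explicit equivalence, namely the $\cc^*$-equivariant Weierstrass family $C=\{y^2z=x^3+axz^2+bz^3\}$ over $U=\mathbb{A}^2\setminus\{0\}$ with section $[0:1:0]$ and action $\lambda\cdot(a,b,[x:y:z])=(\lambda^4a,\lambda^6b,[\lambda^2x:\lambda^3y:z])$. Your outline recovers exactly this family and supplies the standard justification (embedding by $|3p|$, reduction to short Weierstrass form, identification of the residual $\mathbb{G}_m$-gauge) that the paper leaves to the reference.
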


We briefly recall the equivalence appeared in \cite[Theorem 3.8]{P1},   as we will need it to do explicit
calculations on $\mbar_{1,1}$, and it also serves to motivate the notations used
in \cite{P1}(Notation 3.9,~3.12;  Definition 3.13,~3.16) that we follow.

Let $U=\mathbb{A}^2-(0,0)$ with $\cc^*$ action: 
$\ld \cdot (a,b) = (\ld^4a,\ld^6b)$,
where  $\ld \in \cc^*$, $(a,b) \in U$.
The equivalence from $\pp(4,6) :=[U/\cc^*]$ to $\mbar_{1,1}$ is induced from 
a $\cc^*$-equivariant family of 1 pointed genus one stable curves $C\to U$.
where 
$$C=\{ (a,b) \times [x: y :z] \in U \times \pp^2 |~ y^2z=x^3+axz^2+bz^3 \},$$
with the section 
$$s: U\to U\times[0,1,0] \subset C,$$
the $\cc^*$ action is given by
$$ \ld \cdot ((a,b) \times [x:y:z]) =(\ld^4a,\ld^6b) \times[\ld^2x: \ld^3y:z].$$

Denote by
\begin{itemize}
   \item 
      $A_k$:   the  component of $I\M_{1,k}$
     consisting of pairs $(x,g)$ with $g$ of order $2$, here $1 \le k \le 4 .$
   \item      
      $C_4$: the 1-pointed curve $\{ [x:y:z] \in \pp^2 |\ y^2z=x^3+xz^2 \}$ with
      $[0:1:0]$ marked. Its automorphism group is generated by 
      $i=\sqrt{-1}$.
   \item      
      $C_6$:  1-pointed curve $\{ [x:y:z] \in \pp^2 |\ y^2z=x^3 +z^3\}$ with 
      $[0:1:0]$ marked.
      Its automorphism group is generated by $\ep=\exp{(2\pi i/6)}$.
      
   \item 
      $C_4'$: $C_4$ with a $2$nd marked point $[0:0:1]$.
   \item 
      $C_6'$:  $C_6$ with a 2nd marked point $[0:1:1]$.
   \item 
      $C_6''$:  $C_6'$ with a 3rd marked point $[0:-1:1]$.
\end{itemize}

\begin{theorem}[\cite{P1} Corollary 3.14]\label{sm inertia} 
\hfill
\begin{enumerate}
   \item
      $I\M_{1,5}=(\M_{1,5}, \id), n\ge5 $.
   \item 
       For $k\le 4$, the twisted sectors of $I\M_{1,k}$ are of dimension 1 or 0.
        $ A_k$ is the only 1 dimensional twisted sector.    
       Zero dimensional twisted sectors are of the form $B\mu_r$, and
       they are determined  by
    \begin{itemize}
       \item   $(C_4, i), (C_4, -i),(C_6, \ep),
       (C_6, \ep^2),(C_6, \ep^4),(C_6, \ep^5)$ for $\M_{1,1}$.
       \item   $(C'_4, i),(C'_4, -i),(C'_6, \ep^2),(C'_6, \ep^4)$ for $\M_{1,2}$.
       \item  $(C''_6, \ep^2),(C''_6, \ep^4)$ for $\M_{1,3}$.
    \end{itemize}
\end{enumerate}
\end{theorem}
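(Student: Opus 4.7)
The plan is to classify pairs $(X, p_1, \ldots, p_k; g)$ with $X$ a smooth genus one curve, the $p_i$ distinct, and $g$ a nontrivial automorphism of the pointed curve, and then read off the moduli dimension of each isomorphism class. Taking the first marked point as the origin realizes $(X, p_1) = (E, O)$ as an elliptic curve with $g \in \operatorname{Aut}(E, O)$. The classical description of automorphism groups of elliptic curves asserts that $\operatorname{Aut}(E, O)$ is $\mu_2$ for generic $j$, $\mu_4$ at $j = 1728$ (representative $C_4$), and $\mu_6$ at $j = 0$ (representative $C_6$). The remaining marked points $p_2, \ldots, p_k$ must lie in the fixed locus $E^g$, so the first technical step is to compute these fixed loci explicitly.

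The involution $[-1]$ on any $(E, O)$ has fixed locus $E[2]$, the four $2$-torsion points. On $C_4$, the generator $i$ acts as $[x:y:z] \mapsto [-x:-iy:z]$, and solving the fixed-point equations together with the curve equation gives $C_4^{i} = C_4^{-i} = \{[0:1:0], [0:0:1]\}$. On $C_6$, the generator $\ep$ acts as $[x:y:z] \mapsto [\ep^2 x : -y : z]$; a direct computation yields $C_6^{\ep} = C_6^{\ep^5} = \{[0:1:0]\}$, $C_6^{\ep^2} = C_6^{\ep^4} = \{[0:1:0], [0:1:1], [0:-1:1]\}$, and $C_6^{\ep^3} = C_6[2]$.

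Now I assemble the sectors. The generic case $g = [-1]$ produces a one-parameter family as $j$ varies, with $k-1$ distinct non-origin $2$-torsion points marked; since monodromy on the $j$-line permutes these three points transitively, for each $1 \le k \le 4$ this contributes a single $1$-dimensional component, namely $A_k$. Since $|E[2]| = 4$ and no nontrivial $g$ fixes more than four points, part (1) follows immediately for $k \ge 5$. In every non-involutive case $(E, O)$ is rigid and the automorphism group of the pointed curve is a cyclic group $\mu_r$, so the sector is a single point with nontrivial stabilizer, i.e.~$B\mu_r$. Enumerating the allowed choices of $p_2, \ldots, p_k$ inside each relevant fixed locus gives exactly the pointed curves $C_4, C_4', C_6, C_6', C_6''$ of the statement, after observing that $\ep^3$ exchanges $[0:1:1]$ and $[0:-1:1]$ while commuting with $\ep^2$ in the abelian group $\mu_6$, so the two a priori choices of a second marked point for a $(C_6, \ep^{2j})$-sector in $\M_{1,2}$ yield the same isomorphism class.

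The main obstacle is not any single calculation but the careful bookkeeping: one must verify that no additional fixed points lurk away from those listed, and that the orbit structure of $\operatorname{Aut}(E, O)$ on ordered tuples in $E^g$ produces exactly one component per sector. Identifying each $0$-dimensional sector with $B\mu_r$ further requires that $\operatorname{Aut}(X, p_1, \ldots, p_k)$ coincide with a cyclic subgroup of $\operatorname{Aut}(E, O)$; this follows on $C_4$ and $C_6$ by direct inspection of which elements of $\operatorname{Aut}(E, O)$ preserve the additional marked points, using the fact that $\operatorname{Aut}(E, O)$ is already cyclic in these two rigid cases.
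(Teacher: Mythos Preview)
The paper does not give its own proof of this statement: Theorem~\ref{sm inertia} is quoted verbatim from \cite{P1} as background material in Section~1.1, with no argument supplied. So there is no in-paper proof to compare against; your proposal is being measured against the classification carried out in Pagani's paper, and your approach---reduce to $g\in\operatorname{Aut}(E,O)$ via the first marked point, invoke the classical trichotomy $\mu_2,\mu_4,\mu_6$, compute the fixed loci explicitly, and enumerate ordered tuples of additional marked points inside them---is exactly the standard one and is essentially what \cite{P1} does.

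Your argument is correct in outline and in its explicit computations. Two small points are worth tightening. First, for the connectedness of $A_k$ when $k=3,4$ you need the monodromy of $E[2]\setminus\{O\}$ over the $j$-line to be all of $S_3$, not merely transitive on the three points, so that it acts transitively on ordered $(k-1)$-tuples; this is the Galois group of $Y(2)\to Y(1)$, and you should say so. Second, your last paragraph slightly overcomplicates the $B\mu_r$ identification: the automorphism group of a point $(x,g)$ in $I\X$ is the centralizer of $g$ in $\operatorname{Aut}(x)$, and since $\operatorname{Aut}(E,O)$ is already cyclic (hence abelian) this centralizer is all of $\operatorname{Aut}(X,p_1,\ldots,p_k)$, automatically a cyclic group as a subgroup of a cyclic group. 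No separate ``direct inspection'' is required.
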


\begin{remark}
       Given $x \in \X(\spec\cc)$ and an order $r$ element 
        $g \in Aut_\X(x)$, the pair $(x,g)$ determines a representable morphism from 
        $B\mu_r$ to $\X$. (see~\cite{AGV} 3.2)          
   As $B\mu_r$ is proper, it is
    closed in $I\mbar_{1,k}$.
\end{remark}

\begin{theorem}
[\cite{P1} Corollary 3.11, lemma 3.17]
\label{basic}    
\hfill 

Let $\ol{A_k}$ be the closure of $A_k$ in $I\mbar_{1,k}$, then
\begin{enumerate}
   \item 
      \begin{itemize}
          \item
             $\ol{A_1}$ is isomorphic to $\mbar_{1,1}$.
          \item 
             $\ol{A_2}\subset I\mbar_{1,2}$ is isomorphic to $\pp(2,4) $.
           \item 
              $\ol{A_3}\subset I\mbar_{1,3}$ is isomorphic to $\pp(2,2)$.
           \item 
              $\ol{A_4} \subset I\mbar_{1,4}$ is isomorphic to $\pp(2,2)$.
       \end{itemize}
     \item
       When viewed as a closed substack of $\mbar_{1,k}$ , 
       $\ol{A_k}$ does not intersect with the boundary divisors $\Delta_K$ for 
       any $K \subset [k]$, here $2\le k \le 4$.
\end{enumerate} 
\end{theorem}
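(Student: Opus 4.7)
My plan is to realize each $\ol{A_k}$ explicitly as a weighted projective stack using the Weierstrass presentation of $\mbar_{1,1}$ from the preceding proposition. The involution $\sigma:[x:y:z]\mapsto[x:-y:z]$ on every Weierstrass cubic $y^2z=x^3+axz^2+bz^3$ is an order-two automorphism fixing $[0:1:0]$, yielding a closed immersion $\mbar_{1,1}\hookrightarrow I\mbar_{1,1}$ that identifies $\ol{A_1}$ with $\mbar_{1,1}$ and disposes of the $k=1$ case. For $k\ge 2$, the remaining $\sigma$-fixed points are the affine $2$-torsion $[x_i:0:1]$ with $x_i$ a root of $x^3+ax+b$, so I would parameterize the relevant families by taking these as the additional marked points.

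Since $\cc^*$ acts with $a$ of weight $4$, $b$ of weight $6$, and the $x$-coordinate of weight $2$, the parameter spaces become: for $k=2$, eliminate $b=-x_0^3-ax_0$ and use $(a,x_0)\in\mathbb{A}^2\setminus\{0\}$ with weights $(4,2)$, giving $\pp(2,4)$; for $k=3,4$, use $(x_0,x_1)\in\mathbb{A}^2\setminus\{0\}$ with weights $(2,2)$, setting $x_2=-x_0-x_1$ and recovering $(a,b)$ as elementary symmetric functions of $(x_0,x_1,x_2)$, giving $\pp(2,2)$. The Weierstrass family over each quotient, together with the chosen marked points, induces a morphism to $I\mbar_{1,k}$ whose image contains $A_k$ as an open dense substack. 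For part (2), every curve in the Weierstrass family is irreducible, so the image of $\ol{A_k}$ in $\mbar_{1,k}$ meets only $\M_{1,k}$ and $\ol{\Delta_0}$. The limit points where two of the $x_i$ coincide correspond, via stable reduction (resolving the $A_1$-singularity of the total space of the Weierstrass family), to curves whose dual graph has two rational components joined by two edges; smoothing either node produces an irreducible one-node curve in $\Delta_0$, so the stable limit lies in $\ol{\Delta_0}$ but never in any $\Delta_K$, which by definition requires a single node joining a genus-$1$ component to a genus-$0$ component.

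The principal obstacle is upgrading the constructed morphism to an isomorphism of stacks, rather than a mere bijection on closed points. Generic stabilizers match automatically: the weighted projective stacks with only even weights have generic stabilizer $\mu_2$ coming from $-1\in\cc^*$, which pairs with $\langle\sigma\rangle$. The remaining task is to match enhanced stabilizers at distinguished points—for instance, the weight-$4$ axis $(a,0)$ in $\pp(2,4)$ has stabilizer $\mu_4$, which must correspond to the centralizer of $\sigma$ inside $\mathrm{Aut}(C_4')=\mathbb{Z}/4$. One also has to verify the morphism is well-defined across the stable-reduction locus above, where matching the stack structure of the weighted projective space with twisted sectors of $I\mbar_{1,k}$ sitting over deeper strata of $\ol{\Delta_0}$ is the most delicate step.
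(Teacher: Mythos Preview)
The paper does not supply its own proof of this statement. The theorem is quoted verbatim as Corollary~3.11 and Lemma~3.17 of \cite{P1} (Pagani), and the authors simply cite it as background before moving on to the Riemann--Roch formula; there is no argument in the paper to compare against.

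Your sketch is a reasonable outline of how such a result is actually established, and it is in the same spirit as Pagani's argument: exploit the Weierstrass presentation $\mbar_{1,1}\simeq\pp(4,6)$, identify the order-two automorphism as the hyperelliptic involution $[x:y:z]\mapsto[x:-y:z]$, and parameterize the extra marked points by roots of $x^3+ax+b$ with their natural $\cc^*$-weights. You have also correctly flagged the genuine content, namely promoting the resulting family to an isomorphism of stacks (matching stabilizers at the special points) and performing the stable reduction over the collision locus. One small caution on part~(2): for $k=4$ the deepest degeneration has all three affine $2$-torsion points colliding (the cuspidal Weierstrass curve $y^2=x^3$), and the stable limit there requires more care than the two-component ``banana'' picture you describe; nonetheless the conclusion that no genus-one tail appears, hence no $\Delta_K$ is hit, remains correct.
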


\subsection{Riemann-Roch formula for Stacks}
We recall the Riemann-Roch formula in a version needed for this paper, 
adopted from Appendix A of \cite{T}.

\begin{theorem}[\cite{Ka},\cite{To} Corollary 4.13]\label{rr}
Let $\X$ be a smooth, proper Deligne-Mumford stack 
with quasi-projective coarse moduli space,
$E$ a vector bundle on $\X$. 
Assume $\X$ has the resolution property, 
i.e. every coherent sheaf is a quotient of a vector bundle, 
then we have the following formula for the Euler characteristics of $E$ :
$$ \chi(\X, E)=\int_{I\X} \tch(E)\ttd(\X).$$
\end{theorem}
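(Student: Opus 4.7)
The plan is to deduce the formula from equivariant Riemann--Roch combined with Atiyah--Bott/Lefschetz localization. First, the resolution property together with Totaro's criterion allows one to present $\X$ as a global quotient $[Y/GL_N]$ with $Y$ smooth and quasi-projective. Under this presentation, $\chi(\X,E)$ equals the $GL_N$-invariant part of the equivariant Euler characteristic $\chi^{GL_N}(Y,\tilde E)$ of the corresponding equivariant bundle $\tilde E$.

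Next, for each $g \in GL_N$ one invokes the Atiyah--Bott/Lefschetz--Riemann--Roch fixed-point formula: the trace of $g$ on $\chi^{GL_N}(Y,\tilde E)$ decomposes as a sum over components $F$ of the fixed locus $Y^g$,
\[
\sum_F \int_F \frac{\ch\lp \tilde E(g)|_F\rp \td(F)}{\ch\lp \lambda_{-1}(N^\vee_{F/Y}(g))\rp},
\]
where for a $g$-equivariant bundle $V = \bigoplus_\zeta V_\zeta$ one writes $V(g) := \sum_\zeta \zeta V_\zeta$ for the eigenvalue-weighted class. The quotients $[Y^g/C_{GL_N}(g)]$ are precisely the components of $I\X$, so averaging the above over conjugacy classes of $g$ and absorbing the centralizer factor $|C_{GL_N}(g)|^{-1}$ into the stacky integral reassembles these contributions into a single integral over $I\X$. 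One then defines $\tch(E)$ on each component of $I\X$ by $\ch(E(g))$, and $\ttd(\X)$ componentwise by $\td(F)$ divided by $\ch(\lambda_{-1}(N^\vee(g)))$ on the nontrivial eigenspaces of the normal bundle; with these definitions the stated formula falls out.

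The main obstacle will be verifying intrinsicality: although $\tch$ and $\ttd$ are constructed through a quotient presentation, one must show the resulting classes on $I\X$ are canonical, independent of both the presentation $[Y/GL_N]$ and of the choice of representative within each conjugacy class. A secondary subtlety is the careful bookkeeping of the $\zeta = 1$ eigenspace of $N_{F/Y}$, which is absorbed into $F = Y^g$ itself and hence contributes an ordinary Todd factor $\td(F)$ rather than an eigenvalue-twisted one. Once these compatibilities are settled, the equivariant fixed-point contributions sum, over conjugacy classes, to exactly $\int_{I\X}\tch(E)\ttd(\X)$.
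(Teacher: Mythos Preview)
The paper does not prove this theorem. It is stated in Section~1.2 as background, quoted from Kawasaki and To\"en (with the precise formulation adapted from Appendix~A of Tseng), and no argument is given. There is therefore nothing in the paper to compare your proposal against.

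Since you have nonetheless sketched an argument, a brief comment on its soundness: the global-quotient strategy is a reasonable route in spirit, but the step where you ``average the above over conjugacy classes of $g$ and absorb the centralizer factor $|C_{GL_N}(g)|^{-1}$'' does not make sense as written, because $GL_N$ is not finite. There is no Haar-type averaging that extracts the invariant part of a virtual representation of $GL_N$ as a finite sum over conjugacy classes, and centralizers in $GL_N$ are positive-dimensional. What actually happens is that, since $\X$ is Deligne--Mumford, only finitely many conjugacy classes of finite-order elements have nonempty fixed locus in $Y$, and the components of $I\X$ are the quotients $[Y^g/C_{GL_N}(g)]$ for those $g$; the passage from the equivariant Euler characteristic on $Y$ to an integral over these components requires an honest descent/localization argument (as in To\"en's paper or in Edidin's treatment via equivariant $K$-theory), not a group average. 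If you want to pursue this line, you should replace the averaging step by the Edidin--Graham localization theorem in equivariant $K$-theory, which identifies $K^{GL_N}(Y)\otimes\mathbb{C}$ with a product over the relevant fixed loci and yields the inertia-stack formula directly.
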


Here
\begin{itemize}

   \item 
      $I\X$ is the inertial stack of $\X$, with projection $p_{\X}: I\X\to\X$.
   \item 
      $\tch(E) \in H^*(I\X)$ is the Chern character of the bundle $\rho(p_{\X}^{*}E)$.
   \item 
      $\rho(F):=\sum_{\zeta} \zeta F^{(\zeta)}\in K^0(I\X)$, if $F=\oplus F^{(\zeta)}$ with $F^{(\zeta)}$ being the eigenbundle of $F$ with eigenvalue $\zeta$.
      
    \item 
       $\ttd(\X)=\frac{Td(I\X)}{Ch(\rho\circ \lambda_{-1}(N_{I\X/\X}^\vee))}$,
       where $Td$ and $Ch$ are the usual Todd class and Chern character.
       $N_{I\X/\X}$ is the normal bundle for $p$, 
       and $N^{\vee}$ is the dual of $N$.
    \item 
       $\lambda_{-1}(V):=\sum_{a\geq 0} (-1)^a\Lambda^aV$ is 
       the $\lambda_{-1}$ operation in K-theory. If $V=\oplus_i V_i$ is
       direct sum of line bundles $V_i$,  then 
       $\lambda_{-1}(V)=\prod_i (1 - V_i)$.
 
\end{itemize}

\begin{remark}
      $ \mbar_{1,n}$ satisfies the resolution property. 
  See, e.g.~\cite{Kr} Proposition 5.1.
 \end{remark}

\begin{remark} \label{pullback}
For a vector bundle $F$ over $\Y$ and a map $f: \X  \to \Y$ inducing
$If: I\X \to I\Y$,
it is easy to see $\rho( p_{\X}^*f^*(F)) = If^* (\rho(p_{\Y}^*F))$ in $K^0(I\X)$.
\end{remark}
\subsection{String Equation}

\begin{proposition}[\cite{L} Section 4.4] \label{se} 
Let $\pi :\mbar_{g,n} \to \mbar_{g,n-1}$ be the forgetful map forgetting the 
$n$-th marked point,  then in terms of generating functions with
variables  $q, q_i,1\le i <n$, we have, when $g=0$,

$$
\pi_*(\prod_{i<n} \frac{1}{1-q_i L_i} )= 
\prod_{i<n}\frac{1}{1-q_i L_i} \cdot
(1+ \sum _{i<n} \frac{q_i}{1-q_i}), 
$$
and when $g>0$,

$$
\pi_*(\frac{1}{1-q\h^{-1}}\prod_{i<n} \frac{1}{1-q_i L_i} )= 
\frac{1}{1-q\h^{-1}}\prod_{i<n}\frac{1}{1-q_i L_i} \cdot
(1 -\h^{-1}+ \sum _{i<n} \frac{q_i}{1-q_i}).
$$ 

Here 
$\pi_*$ is the \emph{$K$-theoretic} pushforward.
\end{proposition}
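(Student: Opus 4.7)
The plan is to compare $L_i$ on $\mbar_{g,n}$ to its pullback $\pi^* L_i$ from $\mbar_{g,n-1}$, decompose each factor of the generating function into a pullback piece and a piece supported on an exceptional divisor $D_{i,n}$, and use pairwise disjointness of these divisors to collapse the product.

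First I would establish the $K$-theoretic identity
\[
 [L_i] - [\pi^* L_i] = \iota_{i,*} \so_{D_{i,n}} \quad \text{in } K^0(\mbar_{g,n}),
\]
where $D_{i,n}$ is the boundary divisor whose generic point carries a rational bubble with exactly the marked points $i$ and $n$, and $\iota_i$ is its inclusion. This starts from the standard line-bundle identity $L_i \simeq \pi^* L_i \otimes \so(D_{i,n})$ together with $0 \to \so \to \so(D_{i,n}) \to \iota_{i,*} N_{D_{i,n}} \to 0$. The normal bundle equals $(L_i|_{\mbar_{g,n-1}})^{-1}$ because the tangent-at-node on the rigid bubble $\pp^1$ is trivial while the tangent on the main side is $(L_i|_{\mbar_{g,n-1}})^{\vee}$. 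Projection formula and $\iota_i^*\pi^* L_i = L_i|_{\mbar_{g,n-1}}$ then give the claim. Applying $\frac{1}{1-a}-\frac{1}{1-b} = \frac{a-b}{(1-a)(1-b)}$ with $a=q_iL_i$ and $b=q_i\pi^* L_i$, and using $\iota_i^* L_i = \so$ (the cotangent at a fixed smooth marked point of a rigid $\pp^1$ with three marks is trivial), I obtain
\[
 \frac{1}{1-q_i L_i} = \pi^* \frac{1}{1-q_i L_i} + \iota_{i,*} \frac{q_i}{(1-q_i)(1-q_i L_i)}.
\]

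Next I would verify that $D_{i,n} \cap D_{j,n} = \emptyset$ for $i \ne j < n$: the marked point $n$ lies on the rational bubble prescribed by $D_{i,n}$, so any separating node coming from $D_{j,n}$ would have to place this bubble on the $\{j,n\}$-side, contradicting that this side carries exactly the markings $\{j,n\}$ (and in particular not $i$). Consequently, upon expanding $\prod_{i<n}\frac{1}{1-q_i L_i}$ with the identity above, every cross term involving two or more $\iota_{i,*}$-contributions vanishes, leaving only
\[
 \pi^*\!\!\prod_{i<n}\frac{1}{1-q_i L_i} \ + \ \sum_{i<n}\iota_{i,*}\!\left(\frac{q_i}{1-q_i}\prod_{j<n}\frac{1}{1-q_j L_j}\right).
\]

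Finally, applying $\pi_*$ and using the projection formula, the fact that $\pi \circ \iota_i = \id$, and the universal-curve computation
\[
 \pi_* \so_{\mbar_{g,n}} = 1 - \h^{\vee} \quad (\,=1 \text{ for } g=0,\ 1 - \h^{-1} \text{ for } g=1\,),
\]
yields the stated formula. The Hodge factor $\tfrac{1}{1-q\h^{-1}}$ is pulled back from $\mbar_{g,n-1}$ and so passes through $\pi_*$ by projection formula. The main obstacle is the pairwise disjointness $D_{i,n}\cap D_{j,n}=\emptyset$: this is the geometric reason the generating function identity collapses to a closed form. Once that and the $K$-theoretic comparison $[L_i]-[\pi^* L_i] = \iota_{i,*}\so_{D_{i,n}}$ are established, the rest is a routine application of the projection formula and the computation of $\pi_*\so$.
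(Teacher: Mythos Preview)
The paper does not give a proof of this proposition; it is simply quoted as a known result with a reference to \cite{L}, Section~4.4. Your argument is the standard one and is essentially correct: compare $L_i$ with $\pi^*L_i$ via the section divisor $D_{i,n}$, use projection formula and the triviality of $\iota_i^*L_i$ on the rigid bubble to rewrite each geometric-series factor, exploit the pairwise disjointness of the $D_{i,n}$ (they are precisely the images of the $n-1$ disjoint sections of the universal curve $\pi$) to kill cross terms, and finish with $\pi_*\so = 1-\h^{\vee}$. One small remark: the paper's statement ``$1-\h^{-1}$'' for $g>0$ is literally correct only for $g=1$, where $\h$ is a line bundle; your formulation $\pi_*\so = 1-\h^{\vee}$ is the right one for general $g$, and specializes correctly to the case at hand.
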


\section{Euler characteristics of universal cotangent line bundles }

In this section, we give an algorithm to compute
\[  \chi \lp \mbar_{1,n}, \h^{ -d} \bigotimes_{i=1}^{n} 
 L_i^{ d_i} \rp , d, d_i \ge 0.\]
It is more efficient to encode these numbers into a generating function
\begin{equation} \label{e:2.1}
 \begin{split}
  X_n := &\chi \lp \mbar_{1,n}, \frac{1}{1-q\h^{-1}} 
    \prod_{i=1}^{n}\frac{1}{1-q_i L_i} \rp \\
     =   &\sum_{d,d_i \ge 0} q^d \prod_{i=1}^{n}q_i^{d_i} 
    \chi \lp \mbar_{1,n}, \h^{ -d} 
     \bigotimes_{i=1}^{n} L_i^{ d_i} \rp.
 \end{split}
\end{equation}
   
We will first show  that the calculation of $X_n$ can be reduced to that of 
$X_{n-1}$ if another generating function $\Phi_n$ in \eqref{e:2.2}
can be calculated.
We then explicitly determine $\Phi_n$ and $X_1$.

\subsection{Reduction from $\mbar_{1,n}$ to $\mbar_{1,n-1}$} \label{s:2.1}
Let $\Phi_n$ be the generating function
\begin{equation} \label{e:2.2} 
   \Phi_n : = \chi
   \lp \mbar_{1,n}, \frac{1}{1-q\h^{-1}} \prod_{i=1}^n
   \lp \frac{1}{1-q_i L_i} - \frac{1}{1-q_i\so} \rp
   \rp\\
\end{equation}

\begin{lemma} When $n >1$, $X_n$
is determined by $\Phi_n$ and $X_{n-1}$. 
More precisely, we have
\begin{align*}
 X_n(q, q_1,\cdots, q_n)= &\Phi_n(q, q_1,\cdots, q_n)+
\sum_{I \subset [n], I\ne \emptyset} (-1)^{|I|+1} \prod_{i \in I} \frac{1}{1-q_i}\cdot\\
& \Big (
X_{n-1}(q, {\{q_j , j \notin I\}}, \underbrace{0, \cdots, 0}_{|I|-1})
(1- \frac{1}{q} + \sum_{j \notin I} \frac{q_j}{1-q_j}) \\
&+ \frac{1}{q}X_{n-1}(0, {\{q_j , j \notin I\}}, \underbrace{0, \cdots, 0}_{|I|-1})
\Big ).
\end{align*}
For the last line,  note that $X_{n-1}(q, q_1, \cdots, q_{n-1})$ is a symmetric function of the variables  $q_1, q_2, \cdots, q_{n-1}$, and it is evaluated at
${\{q_j , j \notin I\}}$ and $|I|-1$ zeros.

\end{lemma}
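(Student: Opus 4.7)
My plan is to reduce $X_n$ to $\Phi_n$ and $X_{n-1}$ in three moves: an algebraic expansion of the integrand in $\Phi_n$, one application of the genus-one string equation, and a rational identity absorbing the Hodge bundle factor. To begin, I would expand each factor $\tfrac{1}{1-q_iL_i}-\tfrac{1}{1-q_i\so}$ inside $\Phi_n$ as a difference of two summands; this gives
\begin{equation*}
\prod_{i=1}^n\left(\tfrac{1}{1-q_iL_i}-\tfrac{1}{1-q_i}\right)=\sum_{I\subset[n]}(-1)^{|I|}\prod_{j\in I}\tfrac{1}{1-q_j}\prod_{i\notin I}\tfrac{1}{1-q_iL_i}.
\end{equation*}
After multiplying by $\tfrac{1}{1-q\h^{-1}}$ and taking the Euler characteristic over $\mbar_{1,n}$, I would isolate the $I=\emptyset$ summand, which equals $X_n$, to obtain
\begin{equation*}
X_n=\Phi_n+\sum_{\emptyset\ne I\subset[n]}(-1)^{|I|+1}\prod_{j\in I}\tfrac{1}{1-q_j}\cdot Y_I,
\end{equation*}
where $Y_I:=\chi\!\left(\mbar_{1,n},\tfrac{1}{1-q\h^{-1}}\prod_{i\notin I}\tfrac{1}{1-q_iL_i}\right)$. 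It then suffices to evaluate each $Y_I$ in terms of $X_{n-1}$.

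Next I would apply the string equation to $Y_I$. For any $i_0\in I$, the sheaf contains no factor involving $L_{i_0}$, so Proposition~\ref{se} (applied to the forgetful map $\mbar_{1,n}\to\mbar_{1,n-1}$ dropping the $i_0$-th marked point, using the $S_n$-symmetry of $\mbar_{1,n}$) gives
\begin{equation*}
Y_I=\chi\!\left(\mbar_{1,n-1},\tfrac{1}{1-q\h^{-1}}\prod_{i\notin I}\tfrac{1}{1-q_iL_i}\cdot\left(1-\h^{-1}+\sum_{j\notin I}\tfrac{q_j}{1-q_j}\right)\right).
\end{equation*}
The final ingredient is the rational identity
\begin{equation*}
\frac{1-\h^{-1}}{1-q\h^{-1}}=\left(1-\tfrac{1}{q}\right)\frac{1}{1-q\h^{-1}}+\tfrac{1}{q},
\end{equation*}
immediate by clearing denominators. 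Substituting this into the integrand of $Y_I$ splits the Euler characteristic into two pieces on $\mbar_{1,n-1}$, one retaining the Hodge factor $\tfrac{1}{1-q\h^{-1}}$ and one without. By the $S_{n-1}$-symmetry of $X_{n-1}$ in its marked-point variables, these two pieces are $X_{n-1}(q,\{q_j\}_{j\notin I},0,\ldots,0)$ and $X_{n-1}(0,\{q_j\}_{j\notin I},0,\ldots,0)$ respectively, each padded with $|I|-1$ zero arguments. Plugging back into the formula for $X_n$ produces the lemma.

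I expect the main subtlety to lie in this Hodge manipulation. The string equation naturally yields the coefficient $1-\h^{-1}$ on $\mbar_{1,n-1}$, whereas the target formula is phrased in the formal variable $q$ via $1-\tfrac{1}{q}$ and carries an additional $\tfrac{1}{q}\,X_{n-1}(0,\ldots)$ correction; the rational identity above is precisely what converts between the two forms and accounts for the appearance of the Hodge-free term. The initial multinomial expansion and the symmetric application of the string equation are routine once the setup is in place.
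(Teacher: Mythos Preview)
Your proposal is correct and follows essentially the same approach as the paper: expand the product defining $\Phi_n$, isolate the $I=\emptyset$ term as $X_n$, apply the genus-one string equation to each remaining summand, and then rewrite the resulting $(1-\h^{-1})$ factor in terms of $q$. The only difference is that you make the algebraic identity $\tfrac{1-\h^{-1}}{1-q\h^{-1}}=(1-\tfrac{1}{q})\tfrac{1}{1-q\h^{-1}}+\tfrac{1}{q}$ explicit, whereas the paper simply writes down the final expression in terms of $X_{n-1}$ without isolating this step.
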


\begin{proof}
This follows directly from the definition and the string equation.
Expand the product $\prod_{i=1}^n 
(\frac{1}{1-q_i L_i} - \frac{1}{1-q_i\so} )$ in $\Phi_n$ we have
$$
\Phi_n
 =
X_n+ 
\sum_{I \subset [n], I \ne \emptyset} (-1)^{|I|} 
\prod_{i \in I} \frac{1}{1- q_i}
 \cdot 
   \chi 
   \lp
      \mbar_{1,n}, \frac{1}{1- q\h^{-1}} \prod_{j \notin I} \frac{1}{1- q_j L_j}
      \rp.
$$

By the string equation
\begin{align*}
&\chi 
\lp
\mbar_{1,n}, \frac{1}{1- q\h^{-1}} \prod_{j \notin I} \frac{1}{1- q_j L_j}
\rp
 \\
& =
\chi 
\lp
\mbar_{1,n-1},  \frac{1}{1- q\h^{-1}} \prod_{j \notin I} \frac{1}{1- q_j L_j}
\cdot (1- \h^{-1} +\sum_{j \notin I} \frac{q_j}{1-q_j})
\rp, \\
\end{align*}
which is  
\[
\begin{split}
 &X_{n-1}(q, {\{q_j , j \notin I\}},\underbrace{0, \cdots, 0}_{|I|-1})
(1- \frac{1}{q} + \sum_{j \notin I} \frac{q_j}{1-q_j}) \\
+ &\frac{1}{q}X_{n-1}(0, {\{q_j , j \notin I\}},\underbrace{0, \cdots, 0}_{|I|-1}).
\end{split}
\]
From here it is easy to see the lemma holds.
\end{proof}

\subsection{Calculation of $\Phi_n$}

By the Riemann-Roch formula (Theorem~\ref{rr}), we have
$$\Phi_n=\int _{I\mbar_{1,n}} \tch (\frac{1}{1-q\h^{-1}} \prod_{i=1}^n
    ( \frac{1}{1-q_i L_i} - \frac{1}{1-q_i\so}))
    \ttd(\mbar_{1,n}).$$

As the inertia stack ${I\mbar_{1,n}}$ is the disjoint union of the distinguished component $(\mbar_{1,n}, \id)$ and its twisted sectors,
the integral  is the sum of the contributions from these components.

\begin{proposition}
The contribution to $\Phi_n$ from  $(\mbar_{1,n} ,\id)$ is 
 $$\frac{(n-1)!}{24(1-q)}\prod_{i=1}^n \frac{q_i}{(1-q_i)^2}.$$
\end{proposition}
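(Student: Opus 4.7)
The plan is to apply Theorem~\ref{rr} componentwise and observe that on the distinguished component $(\mbar_{1,n}, \id)$ the situation simplifies drastically: since every $g$ is trivial, $\rho$ acts trivially on every pulled-back bundle, so $\tch$ restricts to the ordinary Chern character $\ch$; moreover the normal bundle $N_{I\X/\X}$ vanishes on $(\X, \id)$, so $\ttd$ restricts to the ordinary Todd class $\td(\mbar_{1,n})$. Thus the sought contribution is
\[
 \int_{\mbar_{1,n}} \ch\!\lp \frac{1}{1-q\h^{-1}}\prod_{i=1}^{n}
 \lp \frac{1}{1-q_i L_i} - \frac{1}{1-q_i} \rp \rp \td(\mbar_{1,n}).
\]

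Next I would set $\psi_i = c_1(L_i)$ and $\ld = c_1(\h)$ (the Hodge bundle is a line bundle since $g=1$), and compute
\[
 \ch\!\lp \frac{1}{1-q_iL_i}-\frac{1}{1-q_i}\rp
 = \frac{q_i(e^{\psi_i}-1)}{(1-q_i)(1-q_ie^{\psi_i})}
 = \frac{q_i}{(1-q_i)^2}\,\psi_i + O(\psi_i^{2}),
\]
and similarly $\ch(1/(1-q\h^{-1})) = 1/(1-qe^{-\ld})$, with constant term $1/(1-q)$, while $\td(\mbar_{1,n})$ has constant term $1$. The key observation is now a degree count: $\dim \mbar_{1,n}=n$, each of the $n$ factors in the product contributes cohomological degree at least $1$ (because the difference $1/(1-q_iL_i)-1/(1-q_i)$ vanishes at $\psi_i=0$), and the Hodge and Todd factors contribute only nonnegative degree. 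Therefore, to reach total degree $n$, every piece must contribute its leading term: each $\psi$-factor contributes exactly $\frac{q_i}{(1-q_i)^2}\psi_i$, and the Hodge/Todd factors contribute only their constant terms.

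This reduces the computation to
\[
 \frac{1}{1-q}\prod_{i=1}^{n}\frac{q_i}{(1-q_i)^2}
 \cdot \int_{\mbar_{1,n}}\psi_1\psi_2\cdots\psi_n,
\]
so the only remaining input is the classical intersection number $\int_{\mbar_{1,n}}\psi_1\cdots\psi_n = (n-1)!/24$. I would obtain this from the base case $\int_{\mbar_{1,1}}\psi_1 = 1/24$ and $n-1$ applications of the dilaton equation $\int_{\mbar_{1,k+1}}\psi_1\cdots\psi_{k+1} = k\int_{\mbar_{1,k}}\psi_1\cdots\psi_k$.

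The argument is essentially routine given Theorem~\ref{rr}; I don't anticipate a serious obstacle. The one point that needs care is the bookkeeping that guarantees only the leading $\psi_i$-terms survive; this relies crucially on the vanishing of each $\ch$-factor at $\psi_i=0$, and on having exactly $n$ such factors matching the dimension of the moduli stack. Once that is in hand, everything else is standard.
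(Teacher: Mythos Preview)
Your proposal is correct and follows essentially the same approach as the paper: reduce $\tch$ and $\ttd$ to the ordinary Chern character and Todd class on the untwisted sector, use the degree count to isolate the leading $\psi_i$-terms, and evaluate $\int_{\mbar_{1,n}}\psi_1\cdots\psi_n=(n-1)!/24$ via the dilaton equation starting from $\int_{\mbar_{1,1}}\psi_1=1/24$. Your explanation of why only the leading terms survive is in fact more explicit than the paper's, which simply writes ``$+$ higher degree terms.''
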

\begin{proof}
On  $(\mbar_{1,n} ,\id)$, $\tch$ and $\ttd$ are the same as $Ch$ and $Td$ respectively,

\[
\begin{split} &Ch(\frac{1}{1-q\h^{-1}}\prod_{i=1}^{n}
(\frac{1}{1-q_i L_i} - \frac{1}{1-q_i\so}) \\
 = &\frac{1}{1-q}\prod_{i=1}^{n} \frac{q_i}{(1-q_i)^2}
 \prod_{i=1}^{n}c_1(L_i)+\hot.
\end{split}
\]

Applying the dilaton equation
$$\int_{\mbar_{1,n}} c_1(L_1)\cdots c_1(L_n)=(n-1) \int_{\mbar_{1,n-1}} c_1(L_1)\cdots c_1(L_{n-1}),$$
and 
$$\int_{\mbar_{1,1}} c_1(L_1) =\frac{1}{24},
$$
we find that 
\[
 \begin{split}
  &\int_{\mbar_{1,n}} 
  Ch\lp \frac{1}{1-q\h^{-1}} \prod_{i=1}^n 
 (\frac{1}{1-q_i L_i} - \frac{1}{1-q_i\so} ) \rp Td(\mbar_{1,n}) \\
  = &\frac{(n-1)!}{24(1-q)}\prod_{i=1}^n \frac{q_i}{(1-q_i)^2}.
 \end{split}
\]

\end{proof}

\begin{proposition}
The contribution to $\Phi_n$ from a twisted sector of type (2) in 
Theorem~\ref{inertia}, i.e.~from $I\Delta_K$, is zero.
\end{proposition}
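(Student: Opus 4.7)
The plan is to show that when the integrand is restricted to a type~(2) twisted sector $Z = \mbar_{0, K\cup\bullet} \times S$, where $S$ is a (genuinely nontrivial) twisted sector of $I\mbar_{1, K^c\cup\bullet}$, the K\"unneth decomposition of the integrand produces only classes of total cohomological degree at least $|K|$ on the $\mbar_{0, K\cup\bullet}$ factor. Since $\dim \mbar_{0, K\cup\bullet} = (|K|+1) - 3 = |K| - 2$, each such integral then vanishes for degree reasons.

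First I would analyze the factors indexed by $i \in K$. The automorphism $g$ coming from $S$ must restrict to the identity on the genus zero component, since a stable rational curve with $|K|+1 \ge 3$ marked points has no nontrivial automorphisms. Hence for $i \in K$ the bundle $L_i|_Z$ pulls back from $\mbar_{0, K\cup\bullet}$ and carries the trivial $g$-eigenvalue, so $\tch$ agrees with the ordinary $Ch$ on this factor. Rewriting
\[
  \frac{1}{1-q_i L_i} - \frac{1}{1-q_i\so} = \frac{q_i(L_i - \so)}{(1-q_iL_i)(1-q_i)},
\]
its Chern character is divisible by $c_1(L_i)$, so the product over $i \in K$ contributes a class pulled back from $\mbar_{0, K\cup\bullet}$ of cohomological degree at least $|K|$.

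Next I would show that the remaining factors contribute only non-negative degrees on the $\mbar_{0, K\cup\bullet}$ side. The Hodge bundle restricted to the boundary is pulled back from the genus one factor (the genus zero component carries no nonzero global differentials), and for $i \in K^c$ the line bundle $L_i$ likewise pulls back from $\mbar_{1, K^c\cup\bullet}$; neither contributes any class on the genus zero side beyond degree $0$. The subtle factor is $\ttd(\mbar_{1,n})|_Z$. The normal bundle of $p_{\mbar_{1,n}}|_Z$ splits as the normal of $S$ in $\mbar_{1, K^c\cup\bullet}$ together with the smoothing-of-the-node line $L_\bullet \otimes L_{\bullet'}$, on which $g$ acts by a nontrivial root of unity $\zeta$. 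Because $\zeta \ne 1$, the factor $1 - \zeta^{-1}(L_\bullet \otimes L_{\bullet'})^{-1}$ that appears in $\rho \circ \lambda_{-1}$ has nonzero constant Chern character $1 - \zeta^{-1}$, is invertible in $H^*(Z)$, and its inverse expands in cohomological degrees $\ge 0$. Combined with $Td(\mbar_{0, K\cup\bullet})$, the $\ttd$ factor therefore contributes only classes of degree $\ge 0$ on the $\mbar_{0, K\cup\bullet}$ side.

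Assembling these estimates, every K\"unneth component of the integrand has cohomological degree at least $|K|$ on $\mbar_{0, K\cup\bullet}$, which exceeds its dimension $|K|-2$, so the integral over the genus zero factor annihilates it and the total contribution from $Z$ is zero. The step I expect to require the most care is the normal-bundle and $\ttd$ analysis: the hypothesis $\zeta \ne 1$, equivalent to $S$ being a genuine twisted sector rather than the identity component, is precisely what makes the smoothing-line factor invertible, and without it the argument would degenerate into the nonzero calculation on the identity component carried out in the preceding proposition.
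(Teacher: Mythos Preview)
Your approach is essentially the paper's: both arguments rest on the fact that for $i\in K$ the automorphism acts trivially on $L_i$ (it is pulled back from the genus-zero factor), so $\tch\big(\frac{1}{1-q_iL_i}-\frac{1}{1-q_i\so}\big)$ is divisible by $c_1(L_i)$, and hence the integrand on $\mbar_{0,K\cup\bullet}$ carries the factor $\prod_{i\in K}c_1(L_i)$ whose degree $|K|$ exceeds $\dim\mbar_{0,K\cup\bullet}=|K|-2$.

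One small point: your detailed unpacking of $\ttd$ and the hypothesis $\zeta\ne 1$ is not actually needed, and the claim that $\zeta\ne 1$ is ``equivalent to $S$ being a genuine twisted sector'' is a slight over-claim. The normal bundle $N_{I\X/\X}$ \emph{by definition} consists only of the nontrivial-eigenvalue summands of $T\X$, so $\rho\circ\lambda_{-1}(N^\vee)$ automatically has invertible Chern character on every twisted sector; if the eigenvalue on the smoothing line happened to be $1$, that line would simply not appear in $N$, and $\ttd(\mbar_{1,n})|_Z$ would still be a perfectly good cohomology class with non-negative K\"unneth degrees. The paper accordingly skips this analysis entirely.
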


\begin{proof}
Recall such a twisted sector is the product of 
$\mbar_{0, K \cup \bullet}$
and a twisted sector $\I$ of \
$\mbar_{1, K^c \cup \bullet}$.
The natural map 
$  \mbar_{0, K \cup \bullet} \times \I \to \mbar_{1,n}$
factors through 
\[ \Delta_{K}:\mbar_{0, K\cup \bullet} \times \mbar_{1, K^c \cup \bullet} 
  \to \mbar_ {1,n} .\]
We quote some known results :  
\begin{itemize}

   \item 
      The dual of the normal bundle for $\Delta_K$ is 
      $pr_1^*(L_{\bullet}) \otimes pr_2^*(L_{\bullet})$.
      Here $pr_i$ is the projection of
      $\mbar_{0, K\cup \bullet} \times \mbar_{1, K^c \cup \bullet}$
      onto its $i$-th factor.
   \item 
      $\Delta_K^*(L_i)$ is $pr_1^*(L_i)$ for $ i \in K$,  and is $pr_2^*(L_i)$
      for $i \notin K$.
   \item 
      $\Delta_K^*(\h)=pr_2^*( \h)$.  
     
\end{itemize}
Using these results, it is then straightforward to see that pushing forward the integrand

$$ \tch (\frac{1}{1-q\h^{-1}} \prod_{i=1}^n
( \frac{1}{1-q_i L_i} - \frac{1}{1-q_i\so}))
    \ttd(\mbar_{1,n})$$ 
to $\mbar_{0,K\cup \bullet}$
gives us  a class 
which has  a factor  $\prod_{i \in K} c_1(L_i)$
coming from 
$\tch(\prod_{i \in K} (\frac{1}{1-q_i L_i} -\frac{1}{1 - q_i\so}))$.
As the degree of $\prod_{i \in K} c_1(L_i)$ exceeds the dimension of 
$\mbar_{0, K \cup \bullet}$, the contribution is zero.
\end{proof}


\begin{proposition} For $2 \le k\le 4$,
the contribution from $\ol{A_k}$ is 
$$(-1)^k \frac{1}{24(1+q)}
    \prod_{i=1}^k\frac{q_i}{1-q_i^2}
    \cdot (11+\frac{2q}{1+q} -\sum_{i=1}^n \frac{2q_i}{1+q_i})
    \cdot d_k ,$$
where $d_k$ is $6,6,3$ for $k=4,3,2$, respectively. 
The number $d_k$ is the degree of 
a maps $\ol{A_k} \to \mbar_{1,1}$ forgetting all but one marked point.
\end{proposition}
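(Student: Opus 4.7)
The plan is to apply the orbifold Riemann--Roch formula (Theorem~\ref{rr}) on the twisted sector $\ol{A_k}$, whose generic object $(E, p_1, \dots, p_k, g)$ has the involution $g$ acting as $-1$ on each cotangent line $L_i$ (it fixes the Weierstrass point $p_i$ with derivative $-1$) and on the Hodge bundle $\h$. Since $\ol{A_k}$ lies in the smooth locus of $\mbar_{1,k}$ outside the irreducible boundary (which it meets in a divisor), the relation $\omega_\pi = \pi^*\h$ for a smooth family of elliptic curves gives $L_i|_{\ol{A_k}} = \h|_{\ol{A_k}}$, so every relevant Chern class reduces to a multiple of $H_k := c_1(\h)|_{\ol{A_k}}$. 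I work modulo $H_k^2$ throughout since $\dim \ol{A_k} = 1$.

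Using $\rho(V) = \zeta V$ for a line bundle $V$ with $g$-eigenvalue $\zeta$, a direct expansion gives
\[
\tch\lp\tfrac{1}{1-q_iL_i} - \tfrac{1}{1-q_i\so}\rp\Big|_{\ol{A_k}} = \tfrac{-2q_i}{1-q_i^2} - \tfrac{q_iH_k}{(1+q_i)^2}, \quad \tch\lp\tfrac{1}{1-q\h^{-1}}\rp\Big|_{\ol{A_k}} = \tfrac{1}{1+q} + \tfrac{qH_k}{(1+q)^2}.
\]
For $\ttd(\mbar_{1,k})|_{\ol{A_k}}$, the normal bundle $N = N_{\ol{A_k}/\mbar_{1,k}}$ has rank $k-1$ with all $g$-eigenvalues equal to $-1$, so $Ch(\rho\circ\lambda_{-1}(N^\vee)) = 2^{k-1}(1 + c_1(N^\vee)/2)$; combining with $Td(\ol{A_k}) = 1 + c_1(T\ol{A_k})/2$ and the adjunction $c_1(T\ol{A_k}) + c_1(N) = c_1(T\mbar_{1,k})|_{\ol{A_k}}$ collapses $\ttd(\mbar_{1,k})|_{\ol{A_k}}$ to $\tfrac{1}{2^{k-1}}\lp 1 + \tfrac{1}{2}c_1(T\mbar_{1,k})|_{\ol{A_k}}\rp$.

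The crucial input, which I expect to be the main obstacle, is the identity $c_1(T\mbar_{1,k})|_{\ol{A_k}} = (11-k)H_k$. I plan to derive this from Mumford's formula $K_{\mbar_{1,k}} = 13\lambda - 2\delta + \sum_i \psi_i$ using $\lambda|_{\ol{A_k}} = \psi_i|_{\ol{A_k}} = H_k$ and $\delta|_{\ol{A_k}} = \delta_{\text{irr}}|_{\ol{A_k}} = 12H_k$; the first simplification uses that Theorem~\ref{basic}\,(2) kills the reducible boundaries $\Delta_K|_{\ol{A_k}}$, and the second uses that $\delta_{\text{irr}}$ pulls back as $12\lambda$ via $\pi_k$, after tracking the ramification multiplicities over the nodal cubic (which can be checked by direct computation in the Weierstrass parametrization of $\ol{A_k}$). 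Substituting everything into the Riemann--Roch integrand, the result collapses modulo $H_k^2$ to the scalar $(-1)^k \cdot 2\prod_i q_i/(1-q_i^2)/(1+q)$ times $\lp 1 + H_k \lp \tfrac{q}{1+q} + \sum_i \tfrac{1-q_i}{2(1+q_i)} + \tfrac{11-k}{2}\rp\rp$; the projection formula gives $\int_{\ol{A_k}} H_k = d_k \int_{\mbar_{1,1}} c_1(\h) = d_k/24$, and the identity $\tfrac{1-q_i}{1+q_i} + \tfrac{2q_i}{1+q_i} = 1$ collapses the resulting expression into the stated closed form.
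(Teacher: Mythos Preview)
Your proposal is correct and follows the same Riemann--Roch computation as the paper: identical expansions of $\tch$ and $\ttd$ on the order-$2$ twisted sector $\ol{A_k}$, reduction to degree-one classes on a curve, and evaluation via $\int_{\ol{A_k}} c_1(\h) = d_k/24$.

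The one substantive difference is how you obtain $c_1(T\mbar_{1,k})|_{\ol{A_k}} = (11-k)H_k$. The paper does this inductively (its Lemma and Corollary~\ref{cor}): the comparison $c_1(T\mbar_{1,k}) = \pi^*c_1(T\mbar_{1,k-1}) - c_1(L_k) + \sum_j \Delta_{\{j,k\}}$ restricts cleanly to $\ol{A_k}$ because Theorem~\ref{basic}(2) kills the $\Delta$-terms, and iterating lands on $c_1(T\mbar_{1,1}) = 10\,c_1(\h)$, giving $(10-(k-1))H_k$. You instead invoke Mumford's formula $K_{\mbar_{1,k}} = 13\lambda - 2\delta + \psi$ together with $\delta_{\mathrm{irr}} = 12\lambda$. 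Both routes are valid; the paper's is self-contained, while yours imports an external identity but is more direct. Two small remarks: your worry about ``tracking ramification multiplicities'' is unnecessary, since $12\lambda = \delta_{\mathrm{irr}}$ already holds in $A^1(\mbar_{1,k})$ and restricts to $\ol{A_k}$ for free; and your justification of $L_i|_{\ol{A_k}} = \h|_{\ol{A_k}}$ via $\omega_\pi = \pi^*\h$ is fine but should note that this isomorphism persists over irreducible nodal fibers (where $\omega \cong \so$), or alternatively use the paper's cleaner route $L_i = \pi^*L_i(\Delta_{\{i,\cdot\}})$ iteratively with Theorem~\ref{basic}(2).
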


\begin{proof}
On $\ol{A_k}$, we have
\begin{align*}
& \quad \tch( 
      \frac{1}{1-q\h^{-1} }  
      \prod_{i=1}^k ( \frac{1}{1-q_i L_i}  -  \frac{1}{1-q_i\so})  )
     \\
     &=\frac{1}{1+q e^{c_1(\h^{-1})} } \prod_{i=1}^k(\frac{1}{1+q_i e^{c_1(L_i)}} -
        \frac{1}{1-q_i})\\
      &=(-2)^k\frac{1}{1+q}\prod_{i=1}^k\frac{q_i}{1-q_i^2}\Big(1+\frac{q}{1+q}c_1(\h)
        + \sum_{i=1}^k\frac{1-q_i}{2(1+q_i)}c_1(L_i)\Big),\\   
\end{align*}
and
\[
    Ch\big(\rho\circ(\Lambda_{-1}(N_{\ol{A}_k/\mbar_{1,k}}^{\vee})\big)\big)=
          2^{k-1}\big(1+\frac{1}{2}c_1(N_{\ol{A}_k/\mbar_{1,k}}^{\vee})\big).
\]

For the above equations, note that over $\ol{A_k}$ the eigenvalues involved in $\tch$ must be $-1$ as the nontrivial  automorphism is of order 2,  also there is no higher degree terms as $\ol{A_k}$ is 1 dimensional.

Thus 
 \begin{align*}
   &\quad \int_{\ol{A_k}} \tch(\frac{1}{1-q\h^{-1}} \prod_{i=1}^k 
    (\frac{1}{1-q_i L_i} - \frac{1}{1-q_i\so}) )\ttd(\mbar_{1,k}))\\
   &=(-1)^k\frac{1}{1+q}\prod_{i=1}^k\frac{q_i}{1-q_i^2} \cdot \\
       &\ \ \quad \cdot \Big( 
         \frac{2q}{1+q} \int_{\ol{A_k}} c_1(H)
         + \sum_{i=1}^k  \frac{1- q_i}{1+q_i} \int_{\ol{A_k}} c_1(L_i) 
         + \int_{\ol{A_k}}c_1(T\mbar_{1,k})
         \Big ).
\end{align*}

It is easy to see$$\int_{\ol{A_k}} c_1(H)=d_k\int_{\mbar_{1,1}}c_1(H)=\frac{d_k}{24},$$ by considering a map
$\ol{A_k} \subset \mbar_{1,k }\to \mbar_{1,1}$ forgetting all but one marked point,
$$ \int_{\ol{A_k}} c_1(L_{i}),1\le i\le k, 
~\textrm{and}~
\int_{\ol{A_k}}  c_1(T\mbar_{1,k}).$$ are determined by 
Corollary \ref{cor}.

\end{proof}

\begin{lemma} Let $\pi: \mbar_{1,n+1} \to \mbar_{1,n}$ be the forgetful
map forgetting the $(n+1)$-th marked point, then
\begin{equation*}
c_1(L_j)=\pi^* c_1(L_j) + \Delta_{ \{ j , n+1\} }, 1 \le j \le n;
\end{equation*}
\begin{equation*}
c_1(T\mbar_{1,n +1})= 
\pi^* c_1(T\mbar_{1,n}) - c_1(L_{n+1}) +\sum_{1\le j \le n} \Delta_{\{j, n+1\}}.
\end{equation*}
\end{lemma}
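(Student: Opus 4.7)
Both identities are the standard comparison formulas for $\psi$-classes and for the canonical bundle under a forgetful/universal-curve morphism; the plan is to derive them from the fact that the map $\pi:\mbar_{1,n+1}\to\mbar_{1,n}$ is, up to natural identification of sections with the boundary divisors $\Delta_{\{j,n+1\}}$, the universal curve. Once $\pi$ is identified with the universal curve, the first identity comes from comparing the cotangent line at the $j$-th marked point before and after stabilization, and the second identity comes from the relative tangent bundle sequence together with a computation of the relative dualizing sheaf $\omega_\pi$.

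For the first formula, consider a geometric point of $\mbar_{1,n+1}$ lying off $\Delta_{\{j,n+1\}}$: the curve has no bubble containing both $p_j$ and $p_{n+1}$, the stabilization map is an isomorphism at $p_j$, and the two cotangent lines $L_j$ and $\pi^*L_j$ are canonically isomorphic there. On $\Delta_{\{j,n+1\}}$, the stable curve acquires a genus-zero bubble containing $p_j$ and $p_{n+1}$, which is contracted by $\pi$; the cotangent at $p_j$ on the bubble and the cotangent at the image node on $\pi(C)$ differ by a canonical section of a line bundle whose zero locus is exactly this divisor. Packaging this yields the canonical isomorphism $L_j\cong \pi^*L_j\otimes \so(\Delta_{\{j,n+1\}})$, and hence the first equation after taking $c_1$.

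For the second formula, use the short exact sequence
\[
 0 \to T_\pi \to T\mbar_{1,n+1} \to \pi^* T\mbar_{1,n} \to 0
\]
associated to the smooth morphism $\pi$ (smooth in the DM/stack sense between the smooth stacks), which gives $c_1(T\mbar_{1,n+1})=c_1(T_\pi)+\pi^*c_1(T\mbar_{1,n})$. The remaining point is to identify $T_\pi=\omega_\pi^{-1}$ in terms of $L_{n+1}$ and the boundary. Viewing $\pi$ as the universal curve with tautological sections $\sigma_j$ corresponding to $\Delta_{\{j,n+1\}}$, the cotangent line $L_{n+1}$ is the restriction of $\omega_\pi$ to the $(n+1)$-th marked point, but $\omega_\pi$ itself over an unstable bubble is \emph{not} equal to $L_{n+1}$ on that bubble; the precise discrepancy is captured by $\omega_\pi=L_{n+1}\otimes\so\bigl(-\sum_{j=1}^n \Delta_{\{j,n+1\}}\bigr)$. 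Dualizing and substituting into the sequence above gives the second identity.

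The main obstacle is keeping the stacky bookkeeping clean: the divisors $\Delta_{\{j,n+1\}}$ must be interpreted as divisors on the DM stack $\mbar_{1,n+1}$, the sections $\sigma_j$ of $\pi$ as closed immersions whose images are these divisors, and the two line bundle isomorphisms above must be realized as canonical isomorphisms of line bundles on the stack (not just on the coarse moduli space), so that no spurious factors appear from the automorphism groups along the elliptic curves with extra symmetry. This is handled by working entirely with the universal curve and universal sections, where all the identifications are tautological; the identities on $\mbar_{1,n+1}$ then follow by functoriality without needing any local computation at stacky points.
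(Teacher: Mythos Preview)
Your argument follows the same two line-bundle identities that the paper invokes, namely $L_j\cong\pi^*L_j\otimes\so(\Delta_{\{j,n+1\}})$ and $L_{n+1}\cong\omega_\pi\otimes\so(\sum_j\Delta_{\{j,n+1\}})$, and then takes $c_1$. So the strategy is the same.

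There is, however, one genuine slip in your derivation of the second formula. You write the short exact sequence
\[
 0 \to T_\pi \to T\mbar_{1,n+1} \to \pi^* T\mbar_{1,n} \to 0
\]
``associated to the smooth morphism $\pi$.'' But $\pi$ is \emph{not} smooth: it is the universal curve, and its fibres are nodal over the boundary of $\mbar_{1,n}$. At those points the relative tangent sheaf is not locally free and your exact sequence of vector bundles does not exist; likewise the identification $T_\pi=\omega_\pi^{-1}$ fails at the nodes. The paper sidesteps this entirely by using the relative dualizing sheaf directly: since both $\mbar_{1,n+1}$ and $\mbar_{1,n}$ are smooth and $\pi$ is flat and Gorenstein, one has the line-bundle identity
\[
 \omega_\pi=\omega_{\mbar_{1,n+1}}\otimes\pi^*\omega_{\mbar_{1,n}}^{-1},
\]
which gives $c_1(\omega_\pi)=-c_1(T\mbar_{1,n+1})+\pi^*c_1(T\mbar_{1,n})$ without any smoothness assumption on $\pi$. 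Substituting this into $c_1(L_{n+1})=c_1(\omega_\pi)+\sum_j\Delta_{\{j,n+1\}}$ yields the second formula. If you replace your tangent-bundle sequence by this observation, your proof is correct and coincides with the paper's.
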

\begin{proof}
Recall for 
 $\pi: \mbar_{1,n +1} \to \mbar_{1,n}$ 
we have
$$
L_j=\pi^* L_j \big( \Delta_{ \{ j, n+1 \} } \big), 1\le j\le n;\qquad
L_{n+1}=\omega_{\pi} \big( \sum_{1\le j \le n} \Delta_{\{j,n+1\}} \big),
$$

where
$
\omega_{\pi} = \omega_{\mbar_{1,n+1}} \otimes \pi^* \omega_{ \mbar_{1,n} }^{\ ~-1}
$ is the relative dualizing sheaf for $\pi$.
Taking the first Chern class of these equations proves the lemma.
\end{proof}

\begin{corollary} \label{cor}
\begin{equation*}
\int_{\ol{ A_k}} c_1(L_j)=
\frac{d_k}{24}, 1 \le j \le k. \quad
\int_{\ol{A_k}}  c_1(T\mbar_{1,k})=\frac{(11-k)d_k}{24}.
\end{equation*}
\end{corollary}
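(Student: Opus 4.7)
The proof will follow from the preceding Lemma combined with two inputs: Theorem~\ref{basic}(2), which guarantees that $\ol{A_k}$, viewed inside $\mbar_{1,k}$, is disjoint from every boundary divisor $\Delta_K$ for $2 \le k \le 4$, and the classical base values $\int_{\mbar_{1,1}} c_1(L_1) = 1/24$ together with $\mbar_{1,1} \cong \pp(4,6)$. The common mechanism is that restricting the Lemma's divisorial identities to $\ol{A_k}$ kills every boundary term, reducing each integral to a degree computation over $\mbar_{1,1}$.

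For the first identity, fix $j$ and let $\sigma_j\colon \mbar_{1,k} \to \mbar_{1,1}$ be the composition of forgetful maps dropping every marked point other than the $j$-th. Iterating the Lemma's formula $c_1(L_j) = \pi^* c_1(L_j) + \Delta_{\{j,n+1\}}$ once per dropped point yields
\[
 c_1(L_j) = \sigma_j^* c_1(L_1) + (\text{sum of boundary divisors } \Delta_K, \ j \in K)
\]
on $\mbar_{1,k}$. Restricting to $\ol{A_k}$ annihilates the boundary terms by Theorem~\ref{basic}(2), and the projection formula with $\deg(\sigma_j|_{\ol{A_k}}) = d_k$ gives $\int_{\ol{A_k}} c_1(L_j) = d_k/24$.

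For the second identity, I induct on $k$. The base case $k=1$ is immediate: on $\mbar_{1,1} \cong \pp(4,6)$ one has $c_1(T) = c_1(\so(4)) + c_1(\so(6)) = c_1(\so(10))$, so $\int c_1(T\mbar_{1,1}) = 10/24 = (11-1)d_1/24$ with $d_1 = 1$. For the inductive step, apply the Lemma's second identity with $n = k-1$ and restrict to $\ol{A_k}$; the boundary terms drop out, leaving
\[
 c_1(T\mbar_{1,k})|_{\ol{A_k}} = (\pi|_{\ol{A_k}})^* \big(c_1(T\mbar_{1,k-1})|_{\ol{A_{k-1}}}\big) - c_1(L_k)|_{\ol{A_k}}.
\]
The restricted forgetful map sends $\ol{A_k}$ into $\ol{A_{k-1}}$ (dropping one marked fixed point of the involution leaves a configuration still in $A_{k-1}$), and its degree is $d_k/d_{k-1}$ by multiplicativity along the factorization $\ol{A_k} \to \ol{A_{k-1}} \to \mbar_{1,1}$ of $\sigma_j$. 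Combining with the first identity and the inductive hypothesis gives
\[
 \int_{\ol{A_k}} c_1(T\mbar_{1,k}) = \frac{d_k}{d_{k-1}} \cdot \frac{(12-k)d_{k-1}}{24} - \frac{d_k}{24} = \frac{(11-k)d_k}{24}.
\]

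The only non-routine points are checking that the restricted forgetful map actually lands in $\ol{A_{k-1}}$ and that its degree is $d_k/d_{k-1}$; both are essentially bookkeeping given the explicit description of $A_k$ and the definition of $d_k$ as the degree of $\sigma_j|_{\ol{A_k}}$. No other subtleties arise, as the vanishing of boundary restrictions is handed to us by Theorem~\ref{basic}.
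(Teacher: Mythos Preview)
Your proof is correct and follows the same mechanism as the paper: iterate the preceding Lemma along forgetful maps, use Theorem~\ref{basic}(2) to kill all boundary contributions on $\ol{A_k}$, and reduce to the known value $\int_{\mbar_{1,1}} c_1(L_1)=1/24$ together with $c_1(T\mbar_{1,1})=10\,c_1(\so(1))$ on $\pp(4,6)$.

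The only organizational difference is in the second identity. The paper goes straight from $\ol{A_k}$ to $\mbar_{1,1}$: unrolling the Lemma gives $c_1(T\mbar_{1,k})|_{\ol{A_k}} = \sigma^*c_1(T\mbar_{1,1}) - \sum_{i=2}^{k}(\text{pullback of } c_1(L_i))$ modulo boundary classes, and each summand integrates to $d_k/24$ by the first identity, yielding $(10-(k-1))d_k/24$ directly. Your version instead inducts through $\ol{A_{k-1}}$, which obliges you to verify that the restricted forgetful map lands in $\ol{A_{k-1}}$ and has degree $d_k/d_{k-1}$. These are easy, as you note, but they are extra checks that the paper's direct descent avoids. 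Either route is fine; the paper's is marginally cleaner for exactly this reason.
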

\begin{proof}
This can be easily proved by applying  the above lemma and Theorem \ref{basic} (2) to a forgetful map $\ol{A_k} \to \mbar_{1,1}$.

\end{proof}


\begin{proposition}\label{point}  
Over the zero dimensional twisted sectors,
the contribution to $\Phi_n$ are:
\begin{itemize}
\item the contribution from $(C_4', i) \sqcup (C_4', -i) $ is
$$\frac{1}{4} \prod_{j=1,2}\frac{q_j}{(1-q_j)}\cdot
\frac{1- q + q_1+ q_2 - q_1q_2+qq_1+qq_2+qq_1q_2}{(1+q^2)(1+q_1^2)(1+q_2^2)};$$
\item the contribution from $(C_6', \ep^2) \sqcup (C_6', \ep^4)$ is
$$\frac{1}{3} \prod_{j=1,2}\frac{q_j}{(1-q_j)}\cdot
\frac{1-q+( q + 2)(q_1+q_2)+(2q+1)q_1q_2}
{(1+q+q^2)(1+q_1+q_1^2)(1+q_2+q_2^2)};
$$
\item the contribution from $(C_6'', \ep^2) \sqcup (C_6'', \ep^4)$ is
\[
 \begin{split}
  -&\frac{1}{3}\prod_{j=1,2,3} \frac{q_j}{(1-q_j)}\cdot \\
 &\frac{1-q+(q+2)(q_1+q_2+q_3)+(2q+1)(q_1q_2+q_1q_3+q_2q_3)+(q-1)q_1q_2q_3}
  {(1+q+q^2)(1+q_1+q_1^2)(1+q_2+q_2^2)(1+q_3+q_3^2)}.
 \end{split}
\]
\end{itemize}
\end{proposition}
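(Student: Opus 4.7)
The plan is to apply the Riemann--Roch formula (Theorem~\ref{rr}) directly to each zero-dimensional twisted sector. By Theorem~\ref{sm inertia} every such sector is a copy of $B\mu_r$ with $r=4$ for $(C_4',\pm i)$ and $r=3$ for the two $\ep^{2,4}$ pairs. On $B\mu_r$ the integral is $\frac{1}{r}$ times the value at the point, and $\tch$ and $\ttd$ reduce to $g$-traces on fibers; the contribution is therefore
\[
\frac{1}{r}\cdot\frac{\mathrm{tr}(g\mid E_x)}{\prod_{\alpha}(1-\alpha^{-1})},
\]
where $\alpha$ runs over the $g$-eigenvalues on $T_x\mbar_{1,n}$ and $E=\frac{1}{1-q\h^{-1}}\prod_{i=1}^n\bigl(\frac{1}{1-q_iL_i}-\frac{1}{1-q_i\so}\bigr)$ is the integrand of $\Phi_n$.

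The required eigenvalues come from the $\cc^*$-equivariant Weierstrass family of Section~1. With $g=\ld\in\mu_r\subset\cc^*$ and $(x,y)$ of weights $(2,3)$, the Hodge fiber $\h|_C$ is spanned by $dx/y$ and the cotangent $L_i$ by the differential of a local parameter: at $p_1=[0{:}1{:}0]$ the parameter is $u=x/y$ (so $L_1$ and $\h$ have the same $g$-eigenvalue), while at $[0{:}0{:}1]\in C_4'$ or $[0{:}\pm1{:}1]\in C_6',C_6''$ it is $y$ or $x$. For the tangent space I use the splitting
\[
T_{[C,p_1,\ldots,p_n]}\mbar_{1,n}\cong T_{[C,p_1]}\mbar_{1,1}\oplus\bigoplus_{i\ge 2}T_{p_i}C,
\]
in which $T\mbar_{1,1}$ at $[C_4]$ (resp.~$[C_6]$) is the direction transverse to the $\cc^*$-orbit in $\pp(4,6)=[\ccc/\cc^*]$, and $T_{p_i}C=L_i^{-1}$.

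Assembling these data, each $B\mu_r$-contribution is an explicit rational function of $q,q_1,\ldots,q_n$ with coefficients in a cyclotomic field. The two sectors in each pair give complex-conjugate expressions, so their sum is twice the real part. The denominators collapse via $(1-iq)(1+iq)=1+q^2$ for $r=4$ and $(1-q\zeta)(1-q\bar\zeta)=1+q+q^2$ for $r=3$ (with $\zeta=\ep^2$), recovering the stated denominators, while expanding the numerators and using $i^2=-1$ or $1+\zeta+\zeta^2=0$ reduces them to the polynomials listed in the proposition. The overall sign for the $(C_6'',\ep^{2,4})$ pair arises from the three factors of $(\zeta-1)$ contributed by the three marked points.

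The main obstacle is the bookkeeping of eigenvalues, in particular the sign convention by which $\mu_r$ acts on cotangent fibers (the fiberwise action on a $1$-form is by inverse pullback, so $\ld$ acts on $dx/y\in\h|_{C_6}$ by multiplication by $\ld$ rather than $\ld^{-1}$). Once this is fixed consistently for $\h$, each $L_i$, and the chart description of $T\mbar_{1,1}$, the rest of the proof is a finite-dimensional algebraic manipulation.
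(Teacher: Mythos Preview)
Your proposal is correct and follows essentially the same line as the paper's proof: identify each zero-dimensional sector as $B\mu_r$, read off the $g$-eigenvalues on $\h$, the $L_i$, and the (co)tangent of $\mbar_{1,n}$, plug these into the Riemann--Roch integrand, and sum the two conjugate sectors to obtain a rational function with real coefficients. The only difference is in how the eigenvalues are obtained. The paper first computes the weights of $L_1,\h,\Omega_{\mbar_{1,1}}$ at $(C_4,\ld)$ and $(C_6,\ld)$ directly on $\pp(4,6)$, and then propagates them to $\mbar_{1,k}$ via the forgetful maps using $L_i=\pi^*L_i$, $\h=\pi^*\h$, and the exact sequence $0\to\pi^*\Omega_{\mbar_{1,k-1}}\to\Omega_{\mbar_{1,k}}\to\omega_\pi\simeq L_k\to 0$; this yields the uniform pattern (eigenvalue $\ld$ for every $L_i$ and for $\h$, and $\ld,\ldots,\ld,\ld^2$ for $\Omega_{\mbar_{1,n}}$). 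Your route---reading local parameters on the Weierstrass model and using the $g$-equivariant filtration $T_{p_i}C\subset T\mbar_{1,n}$---produces the same weights once the inverse-pullback convention you flag is applied consistently (e.g.\ at $[0{:}0{:}1]\in C_4'$ the parameter $y$ has naive weight $\ld^3$, hence the cotangent eigenvalue is $\ld^{-3}=\ld$ in $\mu_4$). One small clarification: your ``splitting'' of $T\mbar_{1,n}$ is really the associated graded of the filtration coming from the tower of forgetful maps, which suffices since only the $g$-characters matter; and the minus sign in the $(C_6'',\ep^{2,4})$ case arises from the cancellation $(\ld-1)^3/(1-\ld)^3=(-1)^3$ between the three numerator factors and the three cotangent factors $(1-\ld)^2(1-\ld^2)=(1-\ld)^3(1+\ld)$, not solely from the numerator.
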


\begin{proof}
To simplify the notation, we will use $(C,\ld)$ to denote a twisted sector.
 
The integrand are determined by the eigenvalues of the bundles involved
in the Riemann Roch formula. 

For $(C_4, \ld), (C_6, \ld)$  of  $I\mbar_{1,1}$,  explicit calculation will show that
the eigenvalues of $L_1$, $H$ and $\Omega_{\mbar_{1,1}}$ are
$\ld$, $\ld$, $\ld^2$  respectively. 
Consider the forgetful map $\pi: \M_{1,k} \to \M_{1,k-1}$, 
as $L_i = \pi^* L_i, i<k$ and $H=\pi^*H$, 
remark \ref{pullback} implies that  the eigenvalues of $L_i$ or $H$ is $\lambda$ 
for $(C, \ld)$.
As $\pi$ is smooth, we have a short exact sequence $ 0 \to \pi^*\Omega_{\mbar_{1,k-1}} \to \Omega_{\mbar_{1,k}} \to \omega_{\pi} \to 0 $ , 
and $\omega_{\pi}$ can be identified with $L_k$. 
It is then easy to see that the eigenvalues of $\Omega_{\mbar_{1,2}}$ are
$\ld$, $\ld^2$ for $(C'_4, \ld)$ or $(C'_6, \ld)$,  and the eigenvalues  
of $\Omega_{\mbar_{1,3}}$ are $\ld, \ld,\ld^2$ for $(C''_6, \ld)$.

From the analysis above, on the twisted sector $(C,\ld)$ of $\mbar_{1,n}$ we have
\begin{align*}
& \tch(\frac{1}{1-q\h^{-1}} \prod_{i = 1}^n ( \frac{1}{1-q_i L_i} - \frac{1}{1-q_i\so} ))
=&\frac{(\ld -1)^{n}}{1-q\ld^{-1}}\prod_{i =1}^n\frac{q_i}{(1-q_i)(1-q_i\ld)},\\
\end{align*}
and 
$$\ttd(\mbar_{1,n}) = \frac{1}{(1-\ld^2)(1-\ld)^{n-1}}=
\frac{1}{(1+\ld)(1-\ld)^{n}}.
$$

The sum of the integral on   
$(C_4', i) \sqcup (C_4, -i) $ is then
$$\frac{1}{4}\sum_{\ld= i, -i}\frac{1}{(1-q\ld^{-1})(1+\ld)}
\prod_{i=1,2}\frac{q_i}{(1-q_i)(1-q_i\ld)},$$
which equals
$$\frac{1}{4} \prod_{j=1,2}\frac{q_j}{(1-q_j)}\cdot
\frac{1- q + q_1+ q_2 - q_1q_2+qq_1+qq_2+qq_1q_2}{(1+q^2)(1+q_1^2)(1+q_2^2)}.$$

The remaining cases also follow directly from our formula of $\tch, \ttd$.
\end{proof}

\subsection{Calculation for $X_1$}

Under the isomorphism $\mbar_{1,1} \simeq \pp(4,6)$, the line bundles $\h$ and $L_1$ all correspond
to $\so(1)$,  
hence $$\chi(\mbar_{1,1}, \h^{ -d}\otimes L_1^{ d_1}) = 
\chi (\pp(4,6), \so(d_1 -d)),$$
and $X_1$ is determined by 
$\chi (\pp(4,6), \so(k)), k \in \mathbb{Z}$.

\begin{lemma} \label{l:mf}
Let $ h^0( \so(k)) =\dim_{\cc}H^0(\pp(4,6), \so(k))$,
then
\[ \sum_{k=0}^{\infty} h^0(\so(k))q^k =\frac{1}{(1-q^4)(1-q^6)}, \]
and $ h^0(\so(k)) = 0$ if $k < 0$.
\end{lemma}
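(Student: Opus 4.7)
The plan is to identify $H^0(\pp(4,6), \so(k))$ concretely using the presentation $\pp(4,6) = [U/\cc^*]$ recalled earlier, where $U = \mathbb{A}^2 \setminus \{(0,0)\}$ and $\ld \cdot (a,b) = (\ld^4 a, \ld^6 b)$.

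First, by the general correspondence between line bundles on a quotient stack $[U/\cc^*]$ and $\cc^*$-equivariant line bundles on $U$, the sheaf $\so(k)$ corresponds to the trivial line bundle on $U$ equipped with the $\cc^*$-action of weight $k$. Therefore
\[
H^0(\pp(4,6), \so(k)) = \{ f \in \so(U) : f(\ld^4 a, \ld^6 b) = \ld^k f(a,b) \text{ for all } \ld \in \cc^* \}.
\]
Next, since $U = \mathbb{A}^2 \setminus \{(0,0)\}$ is the complement of a codimension-two subset in the smooth variety $\mathbb{A}^2$, Hartogs' extension theorem (equivalently, the algebraic statement that $\so(\mathbb{A}^2) \to \so(U)$ is an isomorphism) identifies $\so(U) = \cc[a,b]$. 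Thus $H^0(\so(k))$ is identified with the subspace of $\cc[a,b]$ spanned by monomials $a^i b^j$ of weight $4i+6j = k$.

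From this description the lemma is immediate. For $k < 0$ there are no nonnegative integer solutions to $4i+6j=k$, so $h^0(\so(k))=0$. For $k \ge 0$, the dimension of $H^0(\so(k))$ equals $\#\{(i,j) \in \mathbb{Z}_{\ge 0}^2 : 4i+6j = k\}$, and summing over $k$ gives
\[
\sum_{k=0}^{\infty} h^0(\so(k)) q^k = \sum_{i,j \ge 0} q^{4i+6j} = \left( \sum_{i \ge 0} q^{4i} \right) \left( \sum_{j \ge 0} q^{6j} \right) = \frac{1}{(1-q^4)(1-q^6)}.
\]

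There is no serious obstacle here; the only point requiring a moment of care is the identification of global sections on the stack $[U/\cc^*]$ with weight-$k$ equivariant regular functions on $U$, and the appeal to Hartogs' theorem to reduce to $\cc[a,b]$. Both are standard.
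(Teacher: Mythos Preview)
Your proof is correct and follows essentially the same approach as the paper: both identify sections of $\so(k)$ with polynomials $f$ satisfying $f(\ld^4 a,\ld^6 b)=\ld^k f(a,b)$, so that a basis is given by monomials $a^i b^j$ with $4i+6j=k$, and then read off the generating function. You supply slightly more justification (the quotient-stack description of sections and the Hartogs argument identifying $\so(U)$ with $\cc[a,b]$), but the argument is the same.
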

\begin{proof}
As a section of  $\so(k)$ on $\pp(4,6)$ corresponds to a 
polynomial $f(x,y)$ satisfying $f(\ld^4 x, \ld^6 y) = \ld^k f(x,y)$ for 
 any $\ld \in \cc^*$,
monomials $x^ay^b$
such that $4a+6b=k$ form a basis of  $H^0(\pp(4,6), \so(k))$. Therefore,
$h^0( \so(k))$ is given by the 
coefficient of $q^k$ in the power series
$\dfrac{1}{(1-q^4)(1-q^6)}$.
\end{proof}

\begin{proposition} \label{p:gmf}
  $$\chi(\mbar_{1,1}, \frac{1}{1-q\h^{-1}}\frac{1}{1-q_1 L_1})=$$
$$\frac{(1-q q_1)(1- q^4 - q^6 -q_1^2 q^6- q_1^2 q^8 - q_1 ^4 q^8
+ q^2 q_1^2 +q^4 q_1^4 + q^6 q_1^6 + q^8 q_1^8)}
{(1- q^4)(1-q^6)(1-q_1^4)(1-q_1^6)} .$$
\end{proposition}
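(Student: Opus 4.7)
The plan is to use the identification $\mbar_{1,1} \cong \pp(4,6)$ recalled just before the proposition, under which both $\h$ and $L_1$ correspond to $\so(1)$. Expanding the geometric series yields
\begin{equation*}
X_1 = \sum_{d,d_1 \ge 0} q^d q_1^{d_1}\,\chi\bigl(\pp(4,6),\, \so(d_1-d)\bigr),
\end{equation*}
reducing the problem to computing $\chi(\pp(4,6), \so(k))$ for every $k \in \mathbb{Z}$ and then repackaging the doubly-indexed sum into a rational function.

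Higher cohomology would be handled by Serre duality on the stack $\pp(4,6)$: as a smooth proper one-dimensional DM stack with dualizing sheaf $\omega = \so(-10)$, one has $h^1(\so(k)) = h^0(\so(-10-k))$. Combined with Lemma \ref{l:mf}, which identifies $h^0(\so(m))$ with the coefficient of $q^m$ in $f(q) := 1/((1-q^4)(1-q^6))$ for $m \ge 0$ (and with $0$ for $m < 0$), this gives
\begin{equation*}
\chi(\pp(4,6), \so(k)) = h^0(\so(k)) - h^0(\so(-10-k)).
\end{equation*}

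I would then split the double sum into the two subregions in which each $h^0$ contributes nontrivially. On $\{d_1 \ge d\}$ the substitution $k = d_1 - d$ and summing the geometric series in $qq_1$ gives $f(q_1)/(1-qq_1)$; on $\{d \ge d_1 + 10\}$ the substitution $k = d - d_1 - 10$ gives $-q^{10} f(q)/(1-qq_1)$; the intermediate strip $-10 < d_1 - d < 0$ is identically zero. Adding,
\begin{equation*}
X_1 = \frac{f(q_1) - q^{10} f(q)}{1 - qq_1}.
\end{equation*}

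It remains to put $X_1$ over the common denominator $(1-q^4)(1-q^6)(1-q_1^4)(1-q_1^6)$ and massage the numerator. The intermediate polynomial $(1-q^4)(1-q^6) - q^{10}(1-q_1^4)(1-q_1^6)$ visibly vanishes along $qq_1 = 1$ (use $q^{10} f(q) = f(1/q)$), so $(1-qq_1)$ divides it and cancels the apparent pole; organizing the quotient produces the polynomial displayed in the proposition. All of the geometric input is packaged in Serre duality on the weighted projective stack, and the main obstacle is simply the final polynomial bookkeeping needed to exhibit the specific closed form stated.
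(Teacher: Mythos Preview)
Your approach is exactly the paper's: the paper's proof consists of the single observation that Serre duality on $\pp(4,6)$ (with $\omega\cong\so(-10)$) gives $\chi(\so(k))=h^0(\so(k))-h^0(\so(-k-10))$, after which it simply cites Lemma~\ref{l:mf} and declares the proposition follows. You carry out precisely this plan, supplying in addition the explicit repackaging $X_1=\bigl(f(q_1)-q^{10}f(q)\bigr)/(1-qq_1)$ that the paper leaves implicit.
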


\begin{proof}

We have $H^1(\pp(4,6), \so(k)) \simeq H^0(\pp(4,6), \so(-10-k))^{^\vee}$ by Serre duality,
so $\chi (\pp(4,6), \so(k))= h^0(\so(k)) -h^0(\so(-k-10))$, and
the proposition now follows from the previous lemma. 
 
\end{proof}

\appendix

\section{A simple proof of Pandharipande's vanishing theorem}
\label{SS:appendix}
The purpose of this appendix is to give a very simple and 
\emph{self-contained} proof of Theorem~\ref{t:rP}, first proved in \cite{rP}.
Recall that the theorem states that at genus zero
\begin{equation}\label{v}
 H^j (\mbar_{0,n}, \otimes_{i=1}^n L_i^{d_i}) =0
\end{equation}
for $j \ge 1$ and $d_i \ge 0$. 
\footnote{The method presented in this appendix can also be used to compute
$H^0(\mbar_{0,n}, \otimes L_i^{ d_i})$.
It is also hoped that this method can help to produce an
$S_n$-equivariant version of our genus zero formula \cite{L0}, which
is needed in the quantum $K$-theory \cite{L} computation of general target 
spaces.}

We will prove \eqref{v} by induction on $n$.
Note that \eqref{v} holds for $n=3$ as $\mbar_{0,3}$ is a point.

For $n > 3$, we first treat the special case that one of the $d_i$ is zero, then reduce the  case that all $d_i > 0$ to that special case.


If one of the $d_i$ is zero,  up to permutation of the marked points 
we can assume $d_n=0$. 
Consider the forgetful map $\pi: \mbar_{0,n} \to \mbar_{0,n-1}$, 
as $R^1 \pi_* (\otimes_{i=1}^{n-1} L_i^{d_i}) =0$ by cohomology and base change
(for $C$ rational and degree of $\so_C(D)$ positive, $H^1(C, \so_C(D))=0$),
we have a degenerated Leray spectral sequence which gives 

\begin{equation*} \label{e:16}
 \begin{split}
  &H^j (\mbar_{0,n}, \otimes_{i=1}^{n-1} L_i^{d_i}) 
 =H^j(\mbar_{0,n-1}, R^0 \pi_*(\otimes_{i=1}^{n-1} L_i^{d_i}) ) \\
 =&H^j \Big ( \mbar_{0,n-1}, (\otimes_{i=1}^{n-1} L_i^{d_i}) 
	\otimes (\so+ \sum_{i, d_i \ne 0} \sum_{m=1}^{d_i} L_i^{-m})  \Big ),
 \end{split}
\end{equation*}
and this is zero by induction. Here 
we used the string equation (Prop. ~\ref{se})
that $K$-theoretically
\[
  \pi_* (\otimes_{i=1}^{n-1}L_i^{d_i}) =(\otimes_{i=1}^{n-1} L_i^{d_i}) 
	\otimes (\so+ \sum_{i, d_i \ne 0} \sum_{m=1}^{d_i} L_i^{-m}) .\]

If all $d_i >0 $,
consider $V := \oplus_{i=1}^n L_i^{d_i}$, note that $\bigwedge^n V$ is  
$\otimes_{i=1}^n {L_i}^{d_i}$.
Choose sections $s_i$ of $L_i$ such that the zero locus of the section 
$\oplus_{i=1}^{n }s_i^{\otimes d_i}$ of 
$V$ is empty.(See the remark below.)
Then  the Koszul complex
\[
 \quad 0 {\to} \mathcal{O} \overset{d}{\to} V 
\overset{d}{\to} \bigwedge^2 V \to \cdots 
\overset{d}{\to} \bigwedge^n V \to 0,
\]
is exact, and
we can compute
$H^*(\mbar_{0,n}, \otimes  {L_i}^{d_i})$ from this resolution.

Form the double complex $(C^p (\underline{U}, \mathcal{K}^q); \delta,
d)_{\{p\ge 0, 0 \le q \le n-1\} }$, where $\underline{U}$ is an affine covering of $\mbar_{0,n}$, $\mathcal{K}^q=\bigwedge ^q V$, 
$C^p$ are the \v{C}ech cochain groups, $\delta$ is the \v{C}ech differential. 

\begin{center}
\setlength{\unitlength}{0.0005in}
%

\begingroup\makeatletter\ifx\SetFigFont\undefined
\def\x#1#2#3#4#5#6#7\relax{\def\x{#1#2#3#4#5#6}}%
\expandafter\x\fmtname xxxxxx\relax \def\y{splain}%
\ifx\x\y   
\gdef\SetFigFont#1#2#3{%
  \ifnum #1<17\tiny\else \ifnum #1<20\small\else
  \ifnum #1<24\normalsize\else \ifnum #1<29\large\else
  \ifnum #1<34\Large\else \ifnum #1<41\LARGE\else
     \huge\fi\fi\fi\fi\fi\fi
  \csname #3\endcsname}%
\else
\gdef\SetFigFont#1#2#3{\begingroup
  \count@#1\relax \ifnum 25<\count@\count@25\fi
  \def\x{\endgroup\@setsize\SetFigFont{#2pt}}%
  \expandafter\x
    \csname \romannumeral\the\count@ pt\expandafter\endcsname
    \csname @\romannumeral\the\count@ pt\endcsname
  \csname #3\endcsname}%
\fi
\fi\endgroup
{\renewcommand{\dashlinestretch}{30}
\begin{picture}(5412,4063)(0,-10)
\path(1050,1315)(1050,1840)
\path(1080.000,1720.000)(1050.000,1840.000)(1020.000,1720.000)
\put(1125,1465){\makebox(0,0)[lb]{\smash{{{\SetFigFont{10}{14.4}{rm}$d$}}}}}
\path(2925,1315)(2925,1840)
\path(2955.000,1720.000)(2925.000,1840.000)(2895.000,1720.000)
\put(3000,1465){\makebox(0,0)[lb]{\smash{{{\SetFigFont{10}{14.4}{rm}$d$}}}}}
\path(2850,2590)(2850,3115)
\path(2880.000,2995.000)(2850.000,3115.000)(2820.000,2995.000)
\put(2925,2740){\makebox(0,0)[lb]{\smash{{{\SetFigFont{12}{14.4}{rm}$d$}}}}}
\path(975,2590)(975,3115)
\path(1005.000,2995.000)(975.000,3115.000)(945.000,2995.000)
\put(1050,2740){\makebox(0,0)[lb]{\smash{{{\SetFigFont{10}{14.4}{rm}$d$}}}}}
\path(3825,865)(4425,865)
\path(4305.000,835.000)(4425.000,865.000)(4305.000,895.000)
\put(3900,1015){\makebox(0,0)[lb]{\smash{{{\SetFigFont{10}{14.4}{rm}$\delta$}}}}}
\path(3825,2140)(4425,2140)
\path(4305.000,2110.000)(4425.000,2140.000)(4305.000,2170.000)
\put(3900,2290){\makebox(0,0)[lb]{\smash{{{\SetFigFont{10}{14.4}{rm}$\delta$}}}}}
\path(1800,2140)(2400,2140)
\path(2280.000,2110.000)(2400.000,2140.000)(2280.000,2170.000)
\put(1875,2290){\makebox(0,0)[lb]{\smash{{{\SetFigFont{10}{14.4}{rm}$\delta$}}}}}
\path(1800,865)(2400,865)
\path(2280.000,835.000)(2400.000,865.000)(2280.000,895.000)
\put(1875,1015){\makebox(0,0)[lb]{\smash{{{\SetFigFont{10}{14.4}{rm}$\delta$}}}}}
\path(25,490)(25,4015)
\path(55.000,3895.000)(25.000,4015.000)(-5.000,3895.000)
\path(25,490)(4400,490)
\path(4280.000,460.000)(4400.000,490.000)(4280.000,520.000)
\put(-350,3940){\makebox(0,0)[lb]{\smash{{{\SetFigFont{10}{14.4}{rm}$q$}}}}}
\put(4525,40){\makebox(0,0)[lb]{\smash{{{\SetFigFont{10}{14.4}{rm}$p$}}}}}
\put(325,790){\makebox(0,0)[lb]{\smash{{{\SetFigFont{8}{14.4}{rm}
$C^0(\underline{U}, \so)$}}}}}
\put(2375,790){\makebox(0,0)[lb]{\smash{{{\SetFigFont{8}{14.4}{rm}
$C^1 (\underline{U}, \so)$}}}}}
\put(325,2140){\makebox(0,0)[lb]{\smash{{{\SetFigFont{8}{14.4}{rm}
$C^0 (\underline{U}, \so(V))$}}}}}
\put(2375,2140){\makebox(0,0)[lb]{\smash{{{\SetFigFont{8}{14.4}{rm}
$C^1(\underline{U}, \so(V))$}}}}}
\end{picture}
}
\end{center}

Using two canonical filtrations (by $p$ and $q$ respectively), we obtain
two spectral sequences ${}'E_r^{p,q}$ and ${}''E_r^{p,q}$ with 
\begin{gather*}
{}'E_1^{p,q} =H^p(\mbar_{0,n}, \mathcal{K}^q),\\
{}''E_2^{p,q} = H^p (\mbar_{0,n}, \mathcal{H}_d^q (\mathcal{K}^*)).
\end{gather*}
These two spectral sequences abut to the same hyper-cohomology
$\mathbb{H}^*(\mbar_{0,n}, \mathcal{K}^*).$

By induction, 
${}'E_1^{p,q} =0$ if $p \ne 0$, since $\mathcal{K}^q$ is the direct sum of
$\otimes L_i^{d_i'}$'s with some $d_i' =0$ . 
So ${}'E_r^{p,q}$ degenerates at $r=1$,  
and by our construction 
$\mathbb{H}^{q}(\mbar_{0,n}, \mathcal{K}^*)
= {}'E_1^{0,q}=0$ when $q \ge n$.

Note that ${}''E_2^{p,q} =0$ if $q \ne n-1$,  
 therefore
${}''E_r^{p,q}$ degenerates at $r=2$, and we have
$H^{p}(\mbar_{0,n} , \otimes L_i ^{d_i})= 
{}''E_2^{p,n-1} 
=\mathbb{H}^{p +n- 1}(\mbar_{0,n}, \mathcal{K}^*)=0$ when $p \ge 1$.

\begin{remark}The zero locus of the section 
$\oplus_{i=1}^{n }s_i^{\otimes d_i}$ is contained in the zero locus of the 
section $\oplus_{i=1}^{n-2} s_i$ of
the vector bundle $\bigoplus_{i=1}^{n-2} L_i$. 
Since having empty zero locus is an open property for sections,
it is easy to show that a generic section of $\bigoplus_{i=1}^{n-2} L_i$
on $\mbar_{0,n}$ has empty zero locus inductively using a forgetful map
as follows.

The statement holds for $n=3, 4$ obviously.
For $n >4$, consider the forgetful map $\pi: \mbar_{0,n} \to \mbar_{0,n-1}$. 
Since $L_i = \pi^* L_i (D_i), 1\le i \le n-2$, 
where $D_i$ is the image of the $i$-th section of $\pi$,
a section $t_i$ of $L_i$ on $\mbar_{0,n-1}$ 
would induce a section $s_i$ of $L_i$
on $\mbar_{0,n}$ with support $\supp s_i= \pi^{-1}(\supp t_i) \cup D_i$.
 It is straightforward to check 
 $\cap_{i=1}^{n-2}\supp s_i = \emptyset$ iff for all $1\le j \le n-2,
 \cap_{i=1, i\ne j}^{n-2} \supp t_i =\emptyset$, 
and these conditions hold for generic $t_i$ by induction.
\end{remark}



\begin{thebibliography}{99}

\bibitem{AGV}
D.\ Abramovich, T.\ Graber, A.\ Vistoli,
\emph{Gromov-Witten theory of Deligne-Mumford stacks},
Amer. J. Math, 130 (2008),no.5,  1337-1398.
 
 


\bibitem{aG}
A.\ Givental,
\emph{On the WDVV equation in quantum $K$-theory},
Michigan Math.\ J.\ 48 (2000), 295-304. 

\bibitem{GL}
A.\ Givental, Y.-P.\ Lee, 
\emph{Quantum $K$-theory on flag manifolds, finite-difference Toda lattices 
and quantum groups},
Invent.\ Math.\ 151 (2003), no.\ 1, 193-219. 

\bibitem{GT}
A.\ Givental, V.~Tonita,
\emph{The Hirzebruch--Riemann--Roch theorem in true genus-0 quantum K-theory},
arXiv:1106.3136.

\bibitem{mK}
M.\ Kapranov,
\emph{Veronese curves and Grothendieck-Knudsen moduli space $\overline{M}_{0,n}$}, J.\ Algebraic Geom.\ 2 (1993), no.\ 2, 239-262. 

\bibitem{Ka} 
T.\ Kawasaki, 
\emph{The Riemann-Roch theorem for complex V-manifolds}, 
 Osaka J.\ Math.\ 16 (1979) 151-159.
 

\bibitem{Kr}
A.\ Kresch,
\emph{On the geometry of Deligne-Mumford stacks}.\ 
Algebraic geometry: Seattle 2005.\ Providence, Rhode Island, 259-271.

\bibitem{L0}
Y.-P.\ Lee,
\emph{A formula for Euler Characteristics of Tautological line Bundles on 
the Deligne-Mumford Moduli Spaces}, 
             IMRN, 1997, No.8, 393-400.
\verb+http://www.math.utah.edu/%7Eyplee/research/imrn.pdf+.
 
\bibitem{L}
Y.-P.\ Lee,
\emph{Quantum K-theory I: foundations},
Duke Math. J. 121 (2004), no. 3, 389-424.

\bibitem{P1} 
N.\ Pagani, 
\emph{Chen-Ruan Cohomology of $\M_{1,n}$ and $\mbar_{1,n}$ }, 
arXiv:0810.2744v3, 
to appear in Ann. Inst. Fourier (Grenoble) 63 (2013).

 

\bibitem{rP}
R.\ Pandharipande, \emph{The symmetric function 
$h^0(\mbar_{0,n}, \mathcal{L}_1^{x_1} \otimes \mathcal{L}_2^{x_2} \otimes 
\cdots \mathcal{L}_n^{x_n})$}, 
J.\ Alg.\ Geom.\ \textbf{6} (1997), no.\ 4, 721-731.


\bibitem{To}
B.\ Toen,
 \emph{Th\' eor\`emes de Riemann-Roch pour les champs de Deligne-Mumford}, 
 K-Theory 18 (1999), no.\ 1,
33-76.
 
\bibitem{T}
H.-H.\ Tseng, 
\emph{Orbifold Quantum Riemann-Roch, Lefschetz and Serre},
Geom.\ Topol.\ 14 (2010) 1-81.

 
\end{thebibliography}
\end{document}